\documentclass[a4paper,11pt]{article}

\usepackage{amsmath}
\usepackage{amssymb}
\usepackage{amsthm}
\usepackage{graphicx}
\usepackage{amscd}
\usepackage{epic, eepic}
\usepackage{url}
\usepackage{color}
\usepackage[utf8]{inputenc} 
\usepackage{comment}
\usepackage{enumerate}   
\usepackage{todonotes}
\usepackage{graphicx}
\usepackage{epstopdf}
\usepackage{enumitem}
\usepackage{tikz}
\usetikzlibrary{calc,decorations.pathreplacing}
\usetikzlibrary{calc,arrows.meta}
\usepackage{parskip}
\usepackage[top=18.5mm, bottom=18.5mm, left=18.5mm, right=18.5mm]{geometry}
\usepackage[bookmarks=false,hidelinks]{hyperref}
\usepackage{bbm}

\usepackage{setspace}

\makeatletter
\renewenvironment{proof}[1][\proofname] {\par\pushQED{\qed}\normalfont\topsep6\p@\@plus6\p@\relax\trivlist\item[\hskip\labelsep\bfseries#1\@addpunct{.}]\ignorespaces}{\popQED\endtrivlist\@endpefalse}
\makeatother

\newtheorem{theorem}{\bf Theorem}[section]
\newtheorem{lemma}[theorem]{\bf Lemma}

\newtheorem{corollary}[theorem]{\bf Corollary}
\newtheorem{proposition}[theorem]{\bf Proposition}
\newtheorem{conjecture}[theorem]{\bf Conjecture}

\theoremstyle{definition}

\newtheorem{definition}[theorem]{\bf Definition}

\def\ex{\mathrm{ex}}

\title{A proof of the rational exponents conjecture}
\date{}

\author{David Conlon\footnote{Department of Mathematics, California Institute of Technology, Pasadena, CA 91125, USA. Email: {\tt dconlon@caltech.edu}.
Supported by NSF Award DMS-2348859.} \and Oliver Janzer\footnote{Institute of Mathematics, EPFL,  Lausanne, Switzerland. Email: {\tt oliver.janzer@epfl.ch}.} \and 
Tao Jiang\footnote{Department of Mathematics, Miami University, Oxford, OH 45056, USA.
Email: {\tt jiangt@miamioh.edu}.}
\and
Jaehoon Kim\footnote{Department of Mathematical Sciences, KAIST, South Korea. Email: {\tt jaehoon.kim@kaist.ac.kr}. Supported by the National Research Foundation of Korea (NRF) grant funded by the Korea government (MSIT) No. RS-2023-00210430. } 
\and
Joonkyung Lee\thanks{School of Computational Sciences, KIAS, Seoul, South Korea. Email: \texttt{joonkyunglee@kias.re.kr}. Supported by the National Research Foundation of Korea (NRF) grant MSIT NRF-2022R1C1C1010300 and KIAS Individual Grant CG112101.} 
\and
Hong Liu\thanks{Extremal Combinatorics and Probability Group (ECOPRO), Institute for Basic Science (IBS), Daejeon, South Korea. Email: \texttt{hongliu@ibs.re.kr}. Supported by the Institute for Basic Science (IBS-R029-C4).}
\and Sean Longbrake\thanks{Department of Mathematics, Emory University, Atlanta, United States. Email: \texttt{sean.longbrake@emory.edu}.}
\and Liana Yepremyan\thanks{Department of Mathematics, Emory University, Atlanta, United States. Email: \texttt{lyeprem@emory.edu}.
}}
\begin{document}

\maketitle

\begin{abstract}
    The rational exponents conjecture of Erd\H os and Simonovits asserts that for every rational $r\in [1,2]$ there exists a graph $H$ such that $\ex(n,H)=\Theta(n^r)$. We present a proof of this conjecture claimed by Epoch AI, explaining how it builds upon ideas and methods developed by many researchers.
\end{abstract}

\section{Introduction}

Given a natural number $n$ and a graph $H$, the extremal number $\ex(n,H)$ is the largest number of edges in a graph on $n$ vertices containing no copy of $H$. When $H$ is not bipartite, the value of $\ex(n,H)$ is reasonably well understood, with a result of Erd\H{o}s and Stone~\cite{erdosstone46}, augmented by an observation of Erd\H{o}s and Simonovits~\cite{erdossim66}, implying that
\[\ex(n,H) = \left(1 - \frac{1}{\chi(H)-1} + o(1)\right) \binom{n}{2},\]
where $\chi(H)$ is the chromatic number of $H$. 

When $H$ is bipartite, the Erd\H{o}s--Stone--Simonovits theorem only implies that $\ex(n, H) = o(n^2)$. However, a stronger estimate due to K\H{o}v\'ari, S\'os and Tur\'an~\cite{kst54} implies that for any bipartite graph $H$ there exist positive constants $c$ and $\delta$ such that $\ex(n, H) \le c n^{2 - \delta}$. The question that then arises is whether one can determine the best possible $\delta$ for a given $H$. This problem is still open even for some of the most basic bipartite graphs, such as the complete bipartite graph $K_{4,4}$ and the even cycle $C_8$. 

Given the difficulties of making progress with specific graphs, Erd\H{o}s and Simonovits (see, for example,~\cite{erdos81}) posed several questions about the general nature of the function $\ex(n,H)$. One such question asks whether, for every graph $H$, there exists a rational exponent $r$ such that $\ex(n,H) = \Theta(n^r)$. Conversely, they also asked if, for every rational $r \in [1,2]$, there exists a graph $H$ such that $\ex(n,H) = \Theta(n^r)$. It is this second conjecture that we will be concerned with here, referring to it as the rational exponents conjecture, though the phrase could easily be used for either direction.

Until a decade ago, no general mechanism was known for giving sharp lower bounds for extremal numbers of bipartite graphs. This changed with Bukh's introduction~\cite{bukh15} of the random algebraic method, which uses random polynomials over finite fields to construct extremal graphs. In its most general form, due to Bukh and Conlon~\cite{bukhconlon18}, this method applies to rooted bipartite graphs. Such a graph $F$ has certain designated vertices, called roots, and its $\ell$th power $F^\ell$ is obtained by gluing $\ell$ copies of $F$ together along the roots, with no additional overlaps. 
Assuming a certain balancedness condition (see Subsection~\ref{subsec:balanced}) and that $\ell$ is sufficiently large, the method gives a lower bound for the extremal number of $F^\ell$. This bound is known to be optimal for a wide class of graphs and is conjecturally optimal whenever $F$ is a tree (see Conjecture~\ref{conj:bukh-conlon}).

In fact, Bukh and Conlon showed that the same construction avoids the whole family $\mathcal{F}_\ell$ of graphs defined by gluing $\ell$ distinct copies of $F$ together along the roots, with additional overlaps now allowed. Using this result, Bukh and Conlon~\cite{bukhconlon18} were able to prove a variant of the rational exponents conjecture where we allow finite families rather than just single graphs. Note that, given a finite family of graphs $\mathcal{H}$, the extremal number $\ex(n,\mathcal{H})$ is the largest number of edges in a graph on $n$ vertices containing no copy of any graph from $\mathcal{H}$.

\begin{theorem}[Bukh--Conlon~\cite{bukhconlon18}]
For every rational $r \in [1,2]$, there exists a finite family of graphs $\mathcal{H}$ such that $\ex(n, \mathcal{H}) = \Theta(n^r)$.
\end{theorem}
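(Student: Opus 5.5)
The plan has two halves: a lower bound from the random algebraic method applied to a rooted tree, and a matching upper bound by a counting (supersaturation) argument.

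Fix the rational $r = 2 - a/b \in (1,2)$; the endpoints $r=1$ and $r=2$ are handled by a single edge or star and by a triangle, respectively. The first step is to choose a suitable rooted tree $F$ with $v$ non-root vertices and $e$ edges such that $e/v = b/a$, i.e.\ $2 - v/e = r$. We also require $F$ to be balanced, meaning that every subset $S$ of the non-root vertices spans at least $(e/v)|S|$ edges (edges counted within $S$ or from $S$ to the roots). A natural choice is a family of ``caterpillar-like'' trees whose roots are the leaves, for instance a path of $a-1$ internal vertices each attached to suitably many root leaves. This choice is to be made so that the density $e/v$ is exactly $b/a$ and the minimum density over all subsets is attained by the whole set.

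Given such a balanced rooted $F$, the lower bound comes from the Bukh--Conlon random algebraic construction. We take random polynomials of bounded degree over $\mathbb{F}_q$ on a bipartite vertex set $\mathbb{F}_q^{t}\times\mathbb{F}_q^{t}$, and define edges by the common vanishing of $s$ polynomials. With parameters chosen so the graph has about $n^{r}$ edges, the balancedness shows that the expected number of copies of $F$ extending a fixed root tuple is $O(1)$. The Lang--Weil dichotomy then shows that the number of extensions is either bounded by a constant $C$ or grows like $\Omega(q)$, and the latter event is rare, so it can be destroyed by deleting few vertices. Consequently, for $\ell > C$, the resulting graph contains no member of the family $\mathcal{F}_\ell$ consisting of all graphs formed from $\ell$ distinct copies of $F$ sharing the same root set, with overlaps permitted. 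This gives $\ex(n,\mathcal{F}_\ell) = \Omega(n^{r})$. Take $\mathcal{H} = \mathcal{F}_\ell$; it is finite because each member has at most $\ell(v + |R|)$ vertices.

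For the upper bound, we show that any graph $G$ with $Kn^{r}$ edges contains a member of $\mathcal{F}_\ell$. By a standard regularization we may assume $G$ is $K$-almost-regular with degree $d \approx n^{r-1}$. Since $F$ is a tree, a greedy embedding yields at least roughly $n\,d^{e}$ copies of $F$. The number of root tuples is at most $n^{|R|}$, and we need $n d^{e} \gg \ell\, n^{|R|}$. One checks that the root count in the chosen tree gives $|R| = e - v + 1$, since the tree has $v + |R|$ vertices and $e$ edges. The inequality then reduces to $d^{e} \gg n^{e-v}$, i.e.\ $d \gg n^{1 - v/e} = n^{r-1}$, which holds for large $K$. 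By pigeonhole, some root tuple extends to $\ell$ distinct copies of $F$, and these together form a member of $\mathcal{F}_\ell$.

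The main obstacle I expect is the combinatorial design of $F$. It must simultaneously be a tree (so the counting upper bound works), have density exactly $b/a$, and be balanced in the Bukh--Conlon sense. I would first try explicit caterpillar trees with root leaves distributed as evenly as possible along the spine, and verify balancedness by checking that contiguous segments are the worst case.
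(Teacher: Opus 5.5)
Your outline follows the route the paper describes and attributes to Bukh and Conlon: a balanced rooted tree $F$ of density $b/a$, the random algebraic construction avoiding all of $\mathcal{F}_\ell$, and an upper bound from counting embeddings of $F$ and pigeonholing over root tuples. The upper bound is fine, using $|R|-1=e-v$, provided the almost-regularity constant does not grow with the edge constant $K$. More simply, pass to a subgraph on $n'\le n$ vertices with minimum degree at least $Kn^{r-1}$ and embed $F$ greedily to get at least $n'(Kn^{r-1}/2)^e$ embeddings. Then note $(Kn^{r-1}/2)^e/(n')^{e-v}\ge (K/2)^e$, since $(r-1)e=e-v$.

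The genuine gap is in the step you flag as the main obstacle: the balancedness condition you wrote down is the wrong one, and with it that step cannot be carried out. You count only edges inside $S$ or from $S$ to the roots. Taking $S=\{u\}$ then forces every non-root to have at least $\lceil\rho\rceil$ root neighbours, so $e\ge\lceil\rho\rceil v>\rho v$ whenever $\rho=b/a$ is not an integer. Thus no rooted graph passes your test for, say, $r=4/3$. The correct $e_S$, as in the paper's $e_H(S)$, counts \emph{every} edge incident with at least one vertex of $S$, including edges from $S$ to non-roots outside $S$.

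This ties to a second inaccuracy in your lower-bound sketch: balancedness is not what makes the first moment bounded. With $n\approx q^{t}$, $p\approx q^{-s}$ and $s/t=a/b$, the expected number of extensions of a root tuple is about $q^{tv-se}=1$, simply because $e/v=b/a$. What the argument needs is $\mathbb{E}[X^D]=O_D(1)$ for a large fixed $D$. Then the expected number of root tuples with $X\ge q/2$, at most $n^{|R|}\,\mathbb{E}[X^D](q/2)^{-D}$, is small enough to delete. Markov's inequality with the first moment alone gives only $n^{|R|}O(1/q)$ bad tuples, far more than $n$ when $r>1$. In the $D$th moment one adds copies one at a time. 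If the new vertices of the next copy form a set $S$ of its non-roots, then all of that copy's edges incident with $S$ are new, including those to already-placed non-roots. This is exactly why $e_S\ge\rho|S|$ is needed with the correct $e_S$.

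With that definition your caterpillar idea works. Put $m=b-a$, take a spine $u_1\cdots u_a$, and give $u_j$ exactly $R_j-R_{j-1}\ge 0$ root leaves, where $R_0=0$, $R_j=\lceil jm/a\rceil$ for $1\le j<a$, and $R_a=m+1$. Then $v=a$ and $e=b$. Distinct maximal runs of $S$ have disjoint sets of incident edges, so it suffices to check runs $S=\{u_{i+1},\dots,u_j\}$. For these, $e_S-(j-i)b/a$ equals:
\begin{itemize}
\item $0$ for the whole spine;
\item $\lceil jm/a\rceil-jm/a$ for initial runs;
\item $1+im/a-\lceil im/a\rceil$ for final runs;
\item the sum of these two nonnegative quantities for interior runs.
\end{itemize}
Also, a single edge does not realise $r=1$, since $\ex(n,K_2)=0$; use $K_{1,2}$, or the case $a=b$ of this construction.
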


The reason this does not resolve the full conjecture is that the natural upper bound argument does not suffice. When $F$ is a tree, it is simple to count unrooted copies of $F$ and then an averaging argument shows that there must be some fixed choice of the roots contained in many copies of $F$. However, such an elementary argument does not give us any control on how these different copies of $F$ intersect, resulting in a member of $\mathcal{F}_\ell$, but not necessarily a copy of $F^\ell$. The attempt to establish the full rational exponents conjecture has therefore centred around efforts to resolve this degeneracy issue, mainly focusing on the case where $F$ is a tree, which would already imply the rational exponents conjecture. This program has led to the development of a variety of interesting techniques, allowing members of the combinatorial community to prove the conjecture across a number of ranges~\cite{conjan22,conlon2021more,janzer2020extremal,jjm22,jly26,jmy22,jiangqiu20, jiangqiu23,kkl21}. To give some concrete instances, these efforts have resulted in a proof of the conjecture for all exponents~\cite{jiangqiu23} of the form $1 + a/b$ with $b > a^2$ and all exponents~\cite{conjan22} of the form $2 - a/b$ with $b \ge (a-1)^2$. 

In this paper, we wish to explain and situate a proof of the conjecture for all exponents claimed by Epoch AI~\cite{bloom26}, obtained during benchmarking for the recently released model GPT-6 Astra. 

\begin{theorem}
For every rational $r \in [1,2]$, there exists a graph $H$ such that $\ex(n, H) = \Theta(n^r)$.
\end{theorem}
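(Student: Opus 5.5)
Our plan is to follow the Bukh--Conlon strategy with $F$ a suitably balanced rooted tree, and to replace the naive averaging step in the upper bound by an argument that controls how copies of $F$ overlap. Write $r = 2 - a/b$ (or $1+a/b$, depending on the range). We first choose a rooted tree $F$ with $v$ non-root vertices and $e$ edges, with $e/v = b/a$ in the appropriate normalisation, so that $F$ is balanced in the sense of Subsection~\ref{subsec:balanced}. Concretely, we would use the family of trees from the Bukh--Conlon construction: a path or caterpillar-type tree with leaves as roots. Its density is $\rho_F = e/v$, and we take $H = F^\ell$ for $\ell$ large. The lower bound $\ex(n,F^\ell) = \Omega(n^{2-1/\rho_F})$ then comes directly from the random algebraic method of Bukh and Conlon, since it avoids all of $\mathcal{F}_\ell \ni F^\ell$. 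The real work is the matching upper bound $\ex(n,F^\ell) = O(n^{2-1/\rho_F})$.

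For the upper bound, we will first pass to an almost-regular subgraph $G$ with $n$ vertices and average degree $d = Cn^{1-1/\rho_F}$, using the Erd\H{o}s--Simonovits regularisation (or Jiang--Seiver's version). We then count embeddings of $F$, which gives at least about $n\, d^{e}/n^{v}$ copies per root tuple on average, and this is at least a large constant times $\ell$. The key step is to show that, for some root tuple, there are $\ell$ copies that are \emph{pairwise disjoint outside the roots}. To do this, we would classify the copies of $F$ sharing a root tuple by their ``overlap pattern'' and show that degenerate configurations are rare. This is a Janzer-style argument in which we delete ``heavy'' subconfigurations. Call a rooted subtree $S \subseteq F$ together with a partial image \emph{heavy} if it extends to too many copies. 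We then iteratively clean $G$, removing edges participating in heavy configurations, so that in the cleaned graph every proper rooted subforest has at most its expected number of extensions. By balancedness, the deletions cost only a small fraction of the copies. Once the graph is cleaned, a sunflower/greedy argument lets us pick $\ell$ copies for a fixed root tuple: each new copy intersects the previously chosen ones only in a bounded set, and the number of copies passing through any fixed non-root vertex is at most $\ell^{-1}$ times the total, by the heaviness bounds. This produces $F^\ell$.

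The main obstacle will be the cleaning step for general $a/b$. Balancedness of $F$ only controls subtrees containing all roots. Overlaps, however, involve arbitrary subforests, whose expected extension counts can exceed $1$ when $d$ is near the threshold. Here we would try to choose $F$ so that it is \emph{strictly} balanced with respect to every rooted subforest, and to use a multi-scale dependent random choice/weighting: vertices are weighted by their number of partial extensions, so that heaviness thresholds are set relative to the actual, rather than the expected, counts. If this fails in a range, we would fall back on the existing results~\cite{jiangqiu23,conjan22} to handle $b$ large relative to $a$. We would then attempt to reduce the remaining exponents to those ranges by gluing constructions, using the fact that $\ex(n,\cdot)$ exponents of $F^\ell$ for subdivided/theta-type trees interpolate. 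We expect that combining the cleaning lemma with the existing ranges is the delicate point.
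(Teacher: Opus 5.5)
There is a genuine gap. Your outline is, in effect, a plan to prove Conjecture~\ref{conj:bukh-conlon} for a family of balanced rooted trees realising every density, and that conjecture is still open. The step carrying all the weight is asserted rather than proved. The claim that ``by balancedness, the deletions cost only a small fraction of the copies'' is exactly the degeneracy problem described in the introduction.

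Balancedness is a statement about expected extension counts in a \emph{random} graph at density $d\approx n^{1-1/\rho}$. In an arbitrary $F^\ell$-free host graph, nothing prevents most copies of $F$ from passing through heavy configurations. To rule this out you would have to show that many heavy configurations themselves force $F^\ell$. That implication has only been established for special trees, and your sketch gives no mechanism for it in general. Strict balancedness with respect to every rooted subforest is not shown to be achievable at every density, and would not by itself supply the implication. ``Multi-scale weighting'' is only a heuristic. The greedy step, bounding the copies through a fixed non-root vertex by $\ell^{-1}$ of the total, presupposes the cleaning. The fallback does not close the gap either: the known ranges \cite{jiangqiu23,conjan22} leave infinitely many exponents uncovered, and there is no known fact that gluing constructions interpolate Tur\'an exponents.

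The missing idea is to abandon trees and let an induction hypothesis absorb the overlaps. The paper works with the family $\mathcal{F}$ of balanced rooted bipartite graphs generated from a rooted edge by $H\mapsto H_k$. Lemma~\ref{lem:recursive balancedness} gives $\rho(H_k)=\frac{k\rho(H)+1}{(k-1)\rho(H)+1}$, and a Euclidean-type induction shows this reaches every $b/a\geq 1$. Crucially, it also gives $(H_k)^\ell\cong(H^\ell)_k$. The upper bound for $(H_k)^\ell$ therefore follows from Theorem~\ref{thm:recursive upper bound} applied to $H^\ell$, whose extremal number is already $O(n^{2-1/\rho(H)})$ by induction.

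That theorem is an Erd\H{o}s--Simonovits-type argument:
\begin{enumerate}
\item After regularising, the Conlon--Janzer--Lee good-path lemma yields a diverse family of $\Omega(md^{k+2})$ paths of length $k+2$.
\item Some pair $u,v$ then carries $\omega(d^\beta)$ of these paths.
\item These paths define an auxiliary bipartite graph between $N(u)$ and $N(v)$ on at most $2d$ vertices, which contains $H^\ell$ by hypothesis.
\item Diversity makes the lifted paths internally disjoint.
\end{enumerate}
The $\ell$ copies are thus never assembled by controlling overlaps of copies of a tree. They come for free from an honest copy of $H^\ell$ in the auxiliary graph.
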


As with many of the recent AI-driven breakthroughs, this advance draws heavily on ideas and methods developed by many researchers over several years. Concretely, in this case, one may point to four prominent sources:

\begin{itemize}

\item
The result of Bukh and Conlon~\cite{bukhconlon18}, which already establishes the required lower bound.

\item
Foundational work of Erd\H{o}s and Simonovits~\cite{erdossim70} implying, roughly speaking, that if $2 - a/b$ is an exponent realised by a large power of a balanced rooted bipartite graph, then so is $2 - a/(a+b)$. 

\item
A conjecture of Kang, Kim and Liu~\cite{kkl21} stating that if a graph $H$ has $\ex(n,H)=O(n^{1+c})$, then its $1$-subdivision $H'$, the graph obtained by adding an additional vertex to each edge, has $\ex(n,H')=O(n^{1+c/2})$. As shown by the same authors, together with the Erd\H{o}s--Simonovits result above and the Bukh--Conlon lower bound, this conjecture would allow one, starting from some easy exponents, to percolate to all possible exponents. 

\item
A technique developed by Conlon, Janzer and Lee~\cite{conlon2021more} which underlies the proof of the main technical result, a variant of the Kang--Kim--Liu subdivision conjecture. 

\end{itemize}

To state this technical result, given a connected bipartite graph $H$ with bipartition $(A,B)$ and a positive integer $k$, we write $H_k$ for the graph obtained by replacing each edge of $H$ with a path of length $k$ and adding two more vertices, one joined to all vertices in $A$ and one joined to all vertices in $B$.  Noting that the $k=1$ case is the true statement of the result of Erd\H os and Simonovits~\cite{erdossim70} mentioned above, the key technical result is now as follows.

\begin{theorem} \label{thm:recursive upper bound}
    Let $H$ be a connected bipartite graph with $\ex(n,H)=O(n^{\beta})$ for some $1\leq \beta\leq 2$. Then, for any positive integer $k$, $\ex(n,H_k)=O(n^{1+\frac{1}{k+2-\beta}})$.
\end{theorem}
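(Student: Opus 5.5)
We will prove the bound in its contrapositive, counting form. Let $G$ be an $n$-vertex graph with $e(G)=Cn^{1+\alpha}$, where $\alpha=\frac{1}{k+2-\beta}$ and $C$ is a large constant. We want to find a copy of $H_k$ in $G$. First we use standard regularisation to assume that $G$ is $K$-almost-regular with minimum degree $\delta\approx d=n^{\alpha}$, and also bipartite. This is the Erd\H{o}s--Simonovits / Jiang--Seiver reduction: a subgraph on $m$ vertices with $\Omega(m^{1+\alpha})$ edges and max/min degree ratio $K$. We then look for $H_k$ in the following shape. The two apex vertices $x,y$ receive the vertices of $A$ and $B$ respectively. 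Each vertex of $A$ is embedded in $N(x)$ and each vertex of $B$ in $N(y)$. Each edge of $H$ becomes a path of length $k$ between the images. So the task is to find $x,y$ and an auxiliary graph $\Gamma_{x,y}$ on $N(x)\cup N(y)$ which contains $H$, where $uv$ is an edge of $\Gamma_{x,y}$ if $u\in N(x)$ and $v\in N(y)$ are joined by a path of length $k$ in $G$. All these paths must be internally disjoint from each other and from the chosen branch vertices.

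The key counting step is to choose $x,y$ so that $\Gamma_{x,y}$ is dense relative to its vertex count $2d$. The number of walks $x - u - \cdots - v - y$ of length $k+2$ in $G$ is at least about $n d^{k+2}$ by almost-regularity, via Sidorenko for paths/even-walk counting. Averaging over the $n^2$ pairs $(x,y)$, some pair has about $d^{k+2}/n$ such walks. The exponent is chosen exactly so that
\[
\frac{d^{k+2}}{n}=n^{(k+2)\alpha-1}=n^{\beta\alpha}=d^{\beta}.
\]
Thus $\Gamma_{x,y}$, on $\le 2Kd$ vertices, has about $C' d^{\beta}$ edges, counted by path multiplicity. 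This is exactly the threshold beyond which $\ex(2Kd,H)=O(d^\beta)$ forces a copy of $H$, once $C$ is large. That is how $\beta$ enters.

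The main obstacle is degeneracy. Walks counted above may not be paths, different edges of $\Gamma_{x,y}$ may be realised by overlapping paths, and a single edge may carry huge multiplicity while the support graph is sparse. I would follow the Conlon--Janzer--Lee technique. Call a pair $(u,v)$, or more generally a vertex $w$, \emph{heavy} if it lies in more than $\varepsilon d^{j}$ of the relevant length-$j$ walks. The Conlon--Janzer--Lee lemma shows that in an almost-regular graph the walks containing a heavy intermediate configuration form only a small fraction of all walks of length $k+2$, unless $G$ already contains a short forbidden structure; such a structure would contain $H_k$ or give a density increment. After discarding walks through heavy configurations, each remaining edge $uv$ of $\Gamma_{x,y}$ has bounded path multiplicity. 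The support graph then still has $\Omega(d^\beta)$ edges, and we obtain a genuine copy of $H$ in $\Gamma_{x,y}$.

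Finally we make the $e(H)$ chosen paths internally disjoint. We pass to a random sub-structure: colour $V(G)$ randomly into $k+2$ layers and keep only walks whose $i$-th vertex lies in layer $i$. This loses a constant factor and forces distinct positions to be distinct vertices. Lightness then shows that each internal vertex lies on only $o(d^{\beta}/|V(H)|)$ edges of $\Gamma_{x,y}$. So we can greedily delete conflicting edges, or apply $\ex(n,H)=O(n^\beta)$ to a subgraph of $\Gamma_{x,y}$ in which each vertex of $G$ is used by at most one path. The delicate part, and the step I expect to be hardest, is getting the lightness thresholds uniformly for all $k$ while keeping the $d^\beta$ edge count. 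Here I would first try the Conlon--Janzer--Lee ``$(K,\varepsilon)$-good path'' induction on path length.
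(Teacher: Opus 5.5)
Your outline has the same skeleton as the paper's proof: regularise, count paths $xw_1\cdots w_{k+1}y$ of length $k+2$, pigeonhole on the endpoints to get $\omega(d^\beta)$ of them, split $V(G)$ into $k+2$ layers, and apply $\ex(2d,H)=O(d^\beta)$ to the auxiliary graph between $N(x)$ and $N(y)$. But the step that carries all the difficulty is only asserted, and as you formulate it, it would not work.

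First, a heaviness threshold of order $\varepsilon d^j$ is far too large. The average number of length-$j$ paths between two vertices is about $d^j/n$, and multiplicities anywhere below $\varepsilon d^j$ still let a single edge of $\Gamma_{x,y}$ carry vastly more than $O(1)$ paths, so ``bounded path multiplicity'' does not follow. You need constant thresholds, organised recursively as in the paper's $L$-admissible/$L$-good paths with bounds $f(\ell,L)$. This is precisely what makes an admissible but not good path force $f(\ell-1,L)^{16}$ internally disjoint paths between its endpoints (Lemma~\ref{manydisjoint}).

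Second, and more seriously, you never explain why an $H_k$-free graph has few non-good paths (Lemma~\ref{fewnotgood}). ``Such a structure would contain $H_k$ or give a density increment'' is the heart of the theorem. The Conlon--Janzer--Lee lemma is stated for the subdivided $K_{t,t}$ with a \emph{single} apex, a different forbidden graph, so it cannot simply be quoted. The paper takes $t=|H|$, so that $H_k\subseteq (K_{t,t})_k$, and argues as follows. Suppose more than $d^\ell/f(\ell-1,L)$ admissible but not good paths $v_0vv_2\cdots v_\ell$ pass through $v$. Then a constant fraction of the pairs in $N(v)\times\Gamma_{\ell-1}(v)$ are heavy. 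Fix $x^*\in N(v)$ with a large set $U_0$ of heavy partners. Regularity gives $|\Gamma_{\ell-1}(x^*)|\le d^{\ell-1}$, so the bipartite graph between $U_0$ and $\Gamma_{\ell-1}(x^*)$ has large constant average degree. It therefore contains a big tree $T_{k-\ell,t_0}$ with leaves in $U_0$, from which one extracts a copy of $T_{k-\ell,t}$ whose leaves are heavy to each of $\omega(1)$ vertices of $N(v)$. Now $v$ is one apex and the additional vertex of the tree (the centre when $\ell=k$) is the other. Heaviness supplies internally disjoint paths of length $\ell$ from $u_1,\dots,u_t\in N(v)$ to the leaves, giving $(K_{t,t})_k$. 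No density increment is involved; this two-apex embedding is the missing idea.

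Your disjointness step also fails as stated. If an internal vertex may lie on $o(d^\beta/|V(H)|)$ of the representing paths, the conflict graph on $E(\Gamma_{x,y})$ can have degree $o(d^\beta)$. Greedy deletion then only guarantees about $e(\Gamma_{x,y})/o(d^\beta)=\omega(1)$ surviving edges, not the $\Omega(d^\beta)$ you need. Since you find $H$ only through the extremal bound, you cannot choose which copy appears. So all retained edges must be pairwise compatible, and this needs a \emph{constant} bound per internal vertex.

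Goodness gives exactly that. A vertex in position $2\le i\le k$ is at distance at most $k$ from both $x$ and $y$. So if every length-$k$ subpath is good, at most $f(i,L)f(k+2-i,L)$ of the $x$--$y$ paths pass through it (the layering fixes its position). The paper sharpens this to one by keeping a single allowed path for each pair of endpoints (diversity, Lemma~\ref{lem:obtain diverse}). After that, distinct $x$--$y$ paths meet only at $w_1$ or $w_{k+1}$, i.e.\ at branch vertices; this is the ``subgraph in which each vertex is used by at most one path'' you were after. With a constant bound your greedy deletion would also work, but not with the bound you state.
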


This result then stands in for the Kang--Kim--Liu subdivision conjecture to allow percolation from an initial set of easy exponents, in this case just the single exponent $1$, to all exponents. The details of this reduction may be found in the next section before we prove Theorem~\ref{thm:recursive upper bound} in Section~\ref{sec:recursive}. We conclude with some further remarks on the problems that remain open.

\section{Reduction to Theorem \ref{thm:recursive upper bound}}
\label{sec:lowerbound}

In this section, we construct a family of balanced rooted bipartite
graphs and obtain lower bounds for the extremal numbers of their powers using the result of Bukh and Conlon~\cite{bukhconlon18}. We then show how this, together with Theorem~\ref{thm:recursive upper bound},  proves the rational exponents conjecture.

\subsection{Balanced rooted graphs} \label{subsec:balanced}

A \emph{rooted graph} is a pair $(H,R)$, where $H$ is a graph and
$\emptyset\neq R\subsetneq V(H)$. We call the vertices in $R$ the
\emph{roots} and write $H$ instead of $(H,R)$ when the root
set is clear from context. For $S\subseteq V(H)$, let $e_H(S)$ denote the number
of edges of $H$ incident with at least one vertex in $S$. For each
non-empty $S\subseteq V(H)$, let
$$
    \rho_H(S):=\frac{e_H(S)}{|S|},
    \qquad
    \rho(H):=\rho_H(V(H)\setminus R).
$$
We say that $H$ is \emph{balanced} if $\rho_H(S)\geq\rho(H)$ for
every non-empty set $S\subseteq V(H)\setminus R$. Note that edges
between roots are not relevant to either the rooted density or balancedness.

For a positive integer $\ell$, the \emph{$\ell$th power} $H^\ell$
is the graph obtained from $\ell$ disjoint copies of $H$ by
identifying the corresponding copies of each root. Thus, the copies
are vertex disjoint outside $R$.

The following result of Bukh and Conlon provides the lower bound
we need. We use the version recorded
in~\cite[Lemma~2.3]{kkl21}.

\begin{lemma}[Bukh--Conlon~\cite{bukhconlon18}]
\label{lem:lower bound}
    Let $H$ be a balanced rooted bipartite graph with $\rho(H)\geq1$.
    Then there exists a positive integer $\ell_0=\ell_0(H)$ such that,
    for every fixed $\ell\geq\ell_0$,
    $$
        \ex(n,H^\ell)=\Omega\bigl(n^{2-1/\rho(H)}\bigr).
    $$
\end{lemma}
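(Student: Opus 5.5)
This lemma is the random algebraic construction of Bukh and Conlon. The plan is to reproduce their argument: build a random polynomial graph over a finite field that contains few copies of $H^\ell$, then delete them. Set $\rho=\rho(H)=a/b$ in lowest terms, with $b=|V(H)\setminus R|$ up to scaling and $a$ the corresponding edge count. Let $q$ be a prime power and $s$ a parameter to be tuned. The host graph is bipartite with parts $U=V=\mathbb{F}_q^{s}$, so $N=q^s$ vertices per side. We pick independent uniformly random polynomials $f_1,\dots,f_t:\mathbb{F}_q^s\times\mathbb{F}_q^s\to\mathbb{F}_q$ of bounded degree $d$, and join $u$ to $v$ when $f_i(u,v)=0$ for all $i$. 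Each edge then appears with probability about $q^{-t}$. Choosing $s,t$ so that $q^{-t}\approx N^{-1/\rho}$ gives about $N^{2-1/\rho}$ edges. Concretely one takes $s=a$, $t=b$ after scaling, so that $N=q^{a}$ and the edge density is $q^{-b}=N^{-b/a}$.

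\textbf{Step 1 (moments).} Fix an image of the roots. Let $X$ be the number of extensions of this image to a copy of $H$. Because random polynomials of degree $d$ are $k$-wise independent on sets of $k$ distinct points, for $d$ large relative to the fixed power $k$, we have $\mathbb{E}[X^k]=O(1)$. To see this, write $X^k$ as a sum over configurations of $k$ copies sharing non-root vertices. A configuration whose union has non-root vertex set $S$ contributes about $N^{|S|}q^{-t e(S)}$. Balancedness ($\rho_H(S)\ge\rho$ for every non-empty $S$) makes every such term $O(1)$. This is the step where balancedness is used.

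\textbf{Step 2 (Lang--Weil dichotomy).} This is the main obstacle: bounded moments alone do not exclude many copies. The set of extensions of a fixed root image is the set of $\mathbb{F}_q$-points of a variety $W$ defined by polynomials of bounded complexity. By Lang--Weil-type results (the Bukh--Conlon lemma), either $|W|\le C$ or $|W|\ge q/2$, with $C$ depending only on $H,s,t,d$. Markov's inequality applied to $X^k$ then gives $\Pr[C<X]\le \Pr[X\ge q/2]\le \mathbb{E}[X^k]/(q/2)^k=O(q^{-k})$. So root images with more than $C$ extensions are rare. A copy of $H^\ell$ requires $\ell$ extensions with disjoint non-root parts, so taking $\ell_0=C+1$, a root image contributes a copy of $H^\ell$ only if it is one of these rare bad images.

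\textbf{Step 3 (deletion).} The expected number of bad root tuples is $O(N^{|R|}q^{-k})$. Delete one vertex from each bad tuple. Choosing $k$ large makes this $o(N)$ vertices, while the expected edge count stays $\Theta(N^{2-1/\rho})$. A first-moment/concentration argument shows the surviving graph has $\Omega(N^{2-1/\rho})$ edges and no $H^\ell$. Since the construction exists only for $N=q^s$, we extend to all $n$ by Bertrand-type interpolation between prime powers. The hypothesis $\rho\ge1$ ensures the target exponent $2-1/\rho$ is at least $1$, so the edge counts are meaningful. Where I would otherwise rely on details, I would cite~\cite{bukhconlon18} for the precise Lang--Weil lemma.
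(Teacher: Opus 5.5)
Your proposal is correct and is essentially the Bukh--Conlon random algebraic argument that the paper relies on; the paper gives no new proof, only citing \cite{bukhconlon18} (via \cite{kkl21}) and handling root--root edges by passing to $H-E(H[R])$, which your argument absorbs automatically since such edges only impose conditions on the fixed root image. Two small repairs are needed: deleting $o(N)$ vertices does not by itself preserve $\Omega(N^{2-1/\rho})$ edges, so charge each deleted vertex at most $N$ edges and choose $k$ with $N^{|R|+1}q^{-k}=o(N^{2-1/\rho})$; and the extension set is $W(\mathbb{F}_q)\setminus D(\mathbb{F}_q)$, with $D$ the union of the diagonals, rather than the points of a variety, which is exactly the form in which the Bukh--Conlon dichotomy lemma is stated.
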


Bukh and Conlon stated their result for balanced rooted
trees with independent roots, but, as observed
in~\cite{kkl21}, their proof also applies to
balanced rooted bipartite graphs with independent roots. We may even allow edges between roots. Indeed, for 
$H'=H-E(H[R])$ with the same root set $R$, we have $\rho(H')=\rho(H)$ and $H'$ is balanced if and only if $H$ is. 
Since $(H')^\ell\subseteq H^\ell$, we have
$$
    \ex(n,H^\ell)\geq\ex(n,(H')^\ell)
       =\Omega\bigl(n^{2-1/\rho(H)}\bigr).
$$

\subsection{The recursive construction}

Recall the graph $H_k$ defined in the introduction, where each edge
of $H$ is replaced by a path of length~$k$ and two new vertices are added. Suppose that $H$ is a
connected bipartite graph rooted on $R$ and let $x,y$ be the two
additional vertices of $H_k$. For each $e\in E(H)$, let $P_e$ be
the path replacing $e$. We consider $H_k$ as a rooted graph with root set
$$
    R_k:=R\cup\{x,y\}\cup
       \bigcup_{e\in E(H[R])} V(P_e).
$$
Thus, every subdivision vertex on an edge between two original roots is
also a root. None of the other subdivision vertices are roots. See Figure \ref{fig:rooted} for an illustration.

\begin{figure}
\centering
\begin{tikzpicture}[
  x=1cm,y=1cm,font=\small,
  edge/.style={draw=black,line width=.65pt},
  root/.style={circle,draw=black,fill=black,inner sep=0pt,minimum size=4.6pt},
  ordinary/.style={circle,draw=black,fill=white,line width=.85pt,
    inner sep=0pt,minimum size=5.4pt}
]

\coordinate (a1) at (0,1.60);
\coordinate (b1) at (2.10,.80);
\coordinate (a2) at (0,0);
\coordinate (b2) at (2.10,-.80);
\coordinate (a3) at (0,-1.60);
\draw[edge] (a1)--(b1)--(a2)--(b2)--(a3);
% A small white underlay distinguishes the crossing from a vertex.
\draw[edge,preaction={draw=white,line width=3pt}] (a3)--(b1);
\foreach \v in {a1,b1}{\node[root] at (\v) {};}
\foreach \v in {a2,b2,a3}{\node[ordinary] at (\v) {};}
\node at (1.05,2.15) {$(H,R)$};

% Construction arrow.
\draw[-{Stealth[length=5pt,width=4pt]},line width=.65pt]
  (3.225,0)--(4.325,0);

\begin{scope}[xshift=1.50cm]
\coordinate (A1) at (5.25,1.60);
\coordinate (B1) at (9.45,.80);
\coordinate (A2) at (5.25,0);
\coordinate (B2) at (9.45,-.80);
\coordinate (A3) at (5.25,-1.60);
\coordinate (X) at (3.95,0);
\coordinate (Y) at (10.75,0);

% The five new hub edges are single edges.
\foreach \v in {A1,A2,A3}{\draw[edge] (X)--(\v);}
\foreach \v in {B1,B2}{\draw[edge] (Y)--(\v);}

% The five original edges are each subdivided exactly twice.
\foreach \s/\t in {A1/B1,B1/A2,A2/B2,B2/A3}{
  \draw[edge] (\s)--(\t);
}
\draw[edge,preaction={draw=white,line width=3pt}] (A3)--(B1);
% Internal vertices on the edge between roots are roots.
\foreach \p in {0.333333,0.666667}{
  \node[root] at ($(A1)!\p!(B1)$) {};
}
% Internal vertices on the other four paths are non-roots.
\foreach \s/\t in {B1/A2,A2/B2,B2/A3,A3/B1}{
  \foreach \p in {0.333333,0.666667}{
    \node[ordinary] at ($(\s)!\p!(\t)$) {};
  }
}

% Original vertices and the two new roots.
\foreach \v in {A1,B1,X,Y}{\node[root] at (\v) {};}
\foreach \v in {A2,A3,B2}{\node[ordinary] at (\v) {};}
\node[left=4pt] at (X) {$x$};
\node[right=4pt] at (Y) {$y$};
\node at (7.35,2.15) {$(H_3,R_3)$};
\end{scope}

% Vertex convention.
\node[root] at (4.80,-2.30) {};
\node[anchor=west] at (5.00,-2.30) {root};
\node[ordinary] at (6.55,-2.30) {};
\node[anchor=west] at (6.75,-2.30) {non-root};

\end{tikzpicture}
\caption{The definition of the rooted graph $H_k$ for $k=3$.}
\label{fig:rooted}
\end{figure}

Observe that $H_k$ is connected and bipartite. If $k$ is odd, the
original vertices retain their place in the bipartition and each of $x,y$ is placed
opposite its neighbours. If $k$ is even, all the original vertices are placed in one part and $x,y$ are placed in the other. In both cases, the
internal vertices of each path $P_e$ are placed in an alternating way.

\begin{lemma}
\label{lem:recursive balancedness}
    Let $H$ be a balanced connected rooted bipartite graph with
    $\rho(H)\geq1$. Then, for every positive integer $k$, the graph
    $H_k$ rooted on $R_k$ is balanced with
    \begin{equation}\label{eq:rooted density}
        \rho(H_k)=\frac{k\rho(H)+1}{(k-1)\rho(H)+1}.
    \end{equation}
    Moreover, for every positive integer $\ell$, 
    \begin{equation}\label{eq:rooted powers}
        (H_k)^\ell\cong(H^\ell)_k.
    \end{equation}
\end{lemma}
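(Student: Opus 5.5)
The proof splits into the density formula \eqref{eq:rooted density}, balancedness, and the isomorphism \eqref{eq:rooted powers}. Throughout, write $U:=V(H)\setminus R$, $v:=|U|$, $e:=e_H(U)$, and $\rho:=\rho(H)=e/v$.

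\textbf{Density.} The non-roots of $H_k$ are the vertices of $U$ together with the $k-1$ internal vertices of each path $P_f$ with $f\in E(H)\setminus E(H[R])$, that is, with $f$ meeting $U$. So there are $v+(k-1)e$ non-roots. The edges of $H_k$ meeting a non-root are of two kinds:
\begin{itemize}
\item the $ke$ edges lying on these paths;
\item the $v$ edges joining $U$ to $\{x,y\}$, since every original vertex is adjacent to exactly one of $x,y$.
\end{itemize}
Hence
\[
\rho(H_k)=\frac{ke+v}{(k-1)e+v}=\frac{k\rho+1}{(k-1)\rho+1},
\]
which is \eqref{eq:rooted density}. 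Write $\rho':=\rho(H_k)$. Two facts about $\rho'$ will be used:
\begin{itemize}
\item $\rho'\ge 1$, because $\rho\ge1$;
\item $\rho'\le k/(k-1)$ when $k\ge2$. Cross-multiplying, this reduces to $k-1\le k$.
\end{itemize}

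\textbf{Balancedness.} Fix a non-empty set $S$ of non-roots of $H_k$, and let $S_0:=S\cap U$. For each path $P_f$ meeting $U$, let $j_f$ be the number of internal vertices of $P_f$ in $S$, and let $t_f\in\{0,1,2\}$ be the number of endpoints of $f$ in $S_0$. The edges of $H_k$ meeting $S$ fall into disjoint groups:
\begin{itemize}
\item the $|S_0|$ edges from $S_0$ to $\{x,y\}$;
\item for each $f$, the edges of $P_f$ meeting $S$.
\end{itemize}
Hence
\[
e_{H_k}(S)-\rho'|S|\ \ge\ |S_0|(1-\rho')+\sum_f \bigl(g_f-\rho' j_f\bigr),
\]
where $g_f$ is the number of edges of $P_f$ meeting $S$. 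We bound each summand by cases on $t_f$.
\begin{itemize}
\item If $t_f=0$ and $j_f\ge1$, the internal vertices of $P_f$ in $S$ meet at least $j_f+1$ edges. Since $j_f\le k-1$, we get $(j_f+1)/j_f\ge k/(k-1)\ge\rho'$, so the summand is nonnegative. Paths with $t_f=j_f=0$ contribute $0$.
\item If $t_f\ge1$, then $g_f\ge\min(k,\,j_f+t_f)\ge \min(k,\,j_f+1)$. Since $\rho'\ge1$, the quantity $\min(k,j+1)-\rho' j$ is minimised over $0\le j\le k-1$ at $j=k-1$. So the summand is at least $k-\rho'(k-1)$.
\end{itemize}
Therefore $e_{H_k}(S)-\rho'|S|$ is at least its value for the set $S^*$ consisting of $S_0$ together with all internal vertices of every path whose edge meets $S_0$. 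If $S_0=\emptyset$ we are already done. Otherwise, with $x:=\rho_H(S_0)$,
\[
\frac{e_{H_k}(S^*)}{|S^*|}=\frac{k\,e_H(S_0)+|S_0|}{(k-1)\,e_H(S_0)+|S_0|}=\frac{kx+1}{(k-1)x+1}.
\]
The map $x\mapsto (kx+1)/((k-1)x+1)$ is increasing: its derivative has numerator $k((k-1)x+1)-(k-1)(kx+1)=1>0$. Balancedness of $H$ gives $x\ge\rho$, so this ratio is at least $\rho'$, as required. For $k=1$ there are no internal vertices and only this last computation is needed.

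The main obstacle is the per-path bookkeeping. One must check that each edge is counted in exactly one group, and that the worst case for a path with $t_f\ge1$ is indeed taking all of its internal vertices. This uses $\rho'\ge1$.

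\textbf{Powers.} The identity \eqref{eq:rooted powers} is a direct check that both graphs arise from the same data, with $(H^\ell)_k$ formed using the root set $R_k$ of $H^\ell$ and with the bipartition of $H^\ell$ induced from that of $H$. Both graphs consist of:
\begin{itemize}
\item the root vertices $R$, the two new vertices $x,y$, and the subdivided paths of $H[R]$, all of which are shared;
\item $\ell$ disjoint copies of $U$ together with the subdivided paths of the edges of $H$ meeting $U$;
\item edges from $x$ (respectively $y$) to every copy of the vertices of $A$ (respectively $B$).
\end{itemize}
Gluing $\ell$ copies of $H_k$ along $R_k$ identifies exactly the shared structure in the first item. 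Subdividing $H^\ell$ produces the same shared structure, because the edges of $H^\ell[R]$ are exactly those of $H[R]$. In both graphs, $x$ and $y$ are joined to all copies of the appropriate side. So the natural vertex correspondence is an isomorphism.
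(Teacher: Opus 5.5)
Your proof is correct. The density count and the isomorphism $(H_k)^\ell\cong(H^\ell)_k$ match the paper's argument, but your proof of balancedness takes a different route.

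The paper argues globally. Letting $p$ be the number of paths $P_e$ that meet $S$, it notes $p\ge e_H(S_0)\ge\rho(H)|S_0|$, hence $|S|\le|S_0|+(k-1)p\le(1/\rho(H)+k-1)p$. It also uses $e_{H_k}(S)\ge|S|+p$: each path meeting $S$ contributes at least one more edge than it has internal vertices in $S$, and there are the $|S_0|$ edges to $x,y$. Together these give $\rho_{H_k}(S)\ge 1+p/|S|\ge 1+1/(k-1+1/\rho(H))=\rho(H_k)$ at once, with no case split and no a priori information about $\rho(H_k)$.

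You instead bound the deficiency $e_{H_k}(S)-\rho(H_k)|S|$ path by path. You use $\rho(H_k)\le k/(k-1)$ to dispose of paths avoiding $S_0$, and $\rho(H_k)\ge 1$ to show that paths meeting $S_0$ are worst when all their internal vertices lie in $S$. This reduces everything to the closure $S^*$, and then to the monotonicity of $s\mapsto (ks+1)/((k-1)s+1)$.

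Your version is longer and needs those two auxiliary inequalities on $\rho(H_k)$. In return, it makes the extremal sets explicit. It shows that on closures the rooted density of $H_k$ is exactly the image of $\rho_H(S_0)$ under an increasing map, so balancedness transfers from $H$ to $H_k$ in a transparent way.
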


\begin{proof}
    Let $U:=V(H)\setminus R$, $a:=|U|$ and $b:=e_H(U)$.
    As $\rho(H)=b/a$, it suffices to show that
    $\rho(H_k)=(kb+a)/((k-1)b+a)$. Each of the $b$ edges incident
    with $U$ gives $k-1$ new non-root vertices and $k$ edges incident
    with non-roots. Each vertex in $U$ is also adjacent to exactly
    one of $x,y$. Moreover, the paths replacing edges between roots contain
    only roots. Hence,
    $$
        |V(H_k)\setminus R_k|=a+(k-1)b,
        \qquad
        e_{H_k}(V(H_k)\setminus R_k)=kb+a.
    $$
    This proves~\eqref{eq:rooted density}.

    Let $S$ be a non-empty subset of $V(H_k)\setminus R_k$ and
    let $S_0:=S\cap U$. We claim that 
    $e_{H_k}(S)\geq\rho(H_k)|S|$. Let $\mathcal{E}$ be the set of
    edges $e\in E(H)$ for which $P_e$ meets $S$ and put
    $p:=|\mathcal{E}|$. Every edge incident with $S_0$ belongs to
    $\mathcal{E}$. Thus, as $H$ is balanced,
    $$
        p\geq e_H(S_0)\geq\rho(H)|S_0|,
    $$
    where the last inequality also holds when $S_0=\emptyset$.
    Moreover, every vertex in $S\setminus S_0$ lies internally on
    one of the $p$ paths corresponding to $\mathcal{E}$. Hence, 
    \begin{equation}\label{eq:nonroot set size}
        |S|\leq |S_0|+(k-1)p
          \leq\bigl(1/\rho(H)+k-1\bigr)p.
    \end{equation}

    For each $e=uv\in\mathcal{E}$, the path $P_e$ has at least
    $|S\cap(V(P_e)\setminus \{u,v\})|+1$ edges incident with $S$.
    Indeed, if $S$ contains an internal vertex of $P_e$, each
    maximal interval of such vertices has one more incident edge
    than the number of vertices in that interval. Otherwise, $S$ contains an
    endpoint of $P_e$ and hence an edge of $P_e$ is incident with $S$.
    Summing over $\mathcal{E}$ and adding the $|S_0|$ edges from
    $S_0$ to $x$ or $y$ gives
    $$
        e_{H_k}(S)\geq |S\setminus S_0|+p+|S_0|=|S|+p.
    $$
    Together with~\eqref{eq:nonroot set size}, this implies that
    $$
        \rho_{H_k}(S)\geq1+\frac{1}{k-1+1/\rho(H)}
        =\frac{k\rho(H)+1}{(k-1)\rho(H)+1}=\rho(H_k),
    $$
    so $H_k$ is balanced.

    To prove~\eqref{eq:rooted powers}, note that an edge incident
    with an original non-root has a replacement path in each
    of the $\ell$ copies in both constructions. An edge between
    two original roots has a single common replacement path, as all its
    internal vertices are roots of $H_k$. The vertices $x,y$ are
    also common roots and their incident edges agree in the two
    constructions. Therefore, $(H_k)^\ell\cong(H^\ell)_k$.
\end{proof}

\subsection{Obtaining all rational densities}

Let $\mathcal{F}$ be the smallest collection of rooted graphs
satisfying the following conditions:
\begin{itemize}
    \item $\mathcal{F}$ contains $K_2$ rooted on one endpoint and
    \item if $H\in\mathcal{F}$ and $k$ is a positive integer, then
    $H_k$ rooted on $R_k$ also belongs to $\mathcal{F}$.
\end{itemize}
The initial rooted graph is balanced with density $1$.
Lemma~\ref{lem:recursive balancedness} then shows that every
$H\in\mathcal{F}$ is balanced, connected and bipartite with
$\rho(H)\geq1$.

\begin{proposition}\label{prop:rational densities}
    For all positive integers $a,b$ with $a\leq b$, there exists
    $H\in\mathcal{F}$ with $\rho(H)=b/a$.
\end{proposition}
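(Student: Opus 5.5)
We argue by strong induction on $a$ (equivalently, on the size of the fraction), using only the density formula~\eqref{eq:rooted density} from Lemma~\ref{lem:recursive balancedness}. It is convenient to rewrite that formula in terms of the reciprocal excess. If $\rho=\rho(H)$, then
\[
\rho(H_k)-1=\frac{\rho}{(k-1)\rho+1},\qquad\text{so}\qquad \frac{1}{\rho(H_k)-1}=k-1+\frac{1}{\rho}.
\]
Thus the operation $H\mapsto H_k$ sends the quantity $1/\rho$ to $k-1+1/\rho$, followed by the map $t\mapsto 1+1/t$. Writing $\rho(H)=b/a$, we obtain
\[
\rho(H_k)=1+\frac{1}{k-1+a/b}=\frac{(k-1)b+a+b}{(k-1)b+a}=\frac{kb+a}{(k-1)b+a}.
\]

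For the base case, $K_2$ rooted on one endpoint gives density $1$. This covers every fraction $b/a$ with $a=b$, since $\rho$ depends only on the rational number $b/a$.

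For the induction step, take $b/a>1$ in lowest terms and set $c=b-a\ge 1$. We want $H\in\mathcal{F}$ and $k\ge1$ with $\rho(H_k)=b/a$. By the displayed identity this means $k-1+1/\rho(H)=a/c$. Since $c\ge1$, we write $a=qc+r$ with $0\le r<c$.

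If $r=0$, then $a/c=q$ is an integer with $q\ge 1$. We take $\rho(H)=1$ and $k=q$, so that $k-1+1=q$. This realises $b/a$ from $K_2$.

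If $r>0$, we need $1/\rho(H)=a/c-(k-1)\in(0,1]$. This forces $k-1=q$ and $\rho(H)=c/r$, with $r<c$. So it suffices to find $H\in\mathcal{F}$ with $\rho(H)=c/r$, where $r\le c$. Its denominator satisfies $r<c\le$\dots; the natural measure is $r<a$, because $r<c=b-a$ would not obviously be less than $a$. However, $r$ is the remainder of $a$ modulo $c$, so $r\le a$, and in fact $r<a$: either $q\ge1$, giving $r=a-qc<a$, or $q=0$, giving $r=a$.

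The main obstacle is this measure of progress. In the case $q=0$, that is $a<c$, the denominator does not decrease: the new fraction $c/a$ has $c=b-a$ in place of $b$. So we should induct on $a+b$ (or on $b$) rather than on $a$. Then:
\begin{itemize}
\item in the case $q=0$, the new pair $(r,c)=(a,b-a)$ has sum $b<a+b$;
\item in the case $q\ge1$, the new pair $(r,c)$ has $r+c\le a+c=b<a+b$.
\end{itemize}
In both cases the pair has a strictly smaller sum and satisfies $r\le c$. The induction hypothesis therefore applies and supplies $H$, and then $H_{q+1}\in\mathcal{F}$ has density $b/a$. This is just the Euclidean algorithm, or continued-fraction expansion, of $a/(b-a)$ run through the recursion. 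It terminates at density $1$, which is the base graph.
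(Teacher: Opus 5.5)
Your argument is correct and is essentially the paper's: with $d=b-a$, your case split ($k=q$ and $\rho(H)=1$ when $d\mid a$, otherwise $k=q+1$ and $\rho(H)=d/r$) is exactly the paper's choice $k=\lceil a/d\rceil$, $c=a-(k-1)d\in[1,d]$. Your final induction on $a+b$ plays the same role as the paper's induction on $b$, where the new numerator $d$ is smaller than $b$; clean up the write-up by stating that induction from the start, since the opening claim of induction on $a$ and the assertion $r<a$ both fail when $q=0$.
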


\begin{proof}
    We use induction on $b$. If $a=b$, then the initial rooted edge
    has density $b/a=1$, as desired. Suppose that $a<b$ and let 
    $$
        d:=b-a,\qquad
        k:=\left\lceil\frac{a}{d}\right\rceil,\qquad
        c:=a-(k-1)d.
    $$
    Then we have $1\leq c\leq d<b$. By the induction hypothesis, there is
    a graph $H\in\mathcal{F}$ with $\rho(H)=d/c$.
    But Lemma~\ref{lem:recursive balancedness} then implies that
    $$
        \rho(H_k)=\frac{kd+c}{(k-1)d+c}
                 =\frac{a+d}{a}=\frac ba.
    $$
    As $H_k\in\mathcal{F}$, this completes the proof.
\end{proof}

\subsection{Proof of the rational exponents conjecture}
Now we are ready to deduce the rational exponents conjecture. We first show that, for every $H\in\mathcal{F}$ and every fixed
    positive integer $\ell$,
    \begin{equation}\label{eq:powers upper bound}
        \ex(n,H^\ell)=O\bigl(n^{2-1/\rho(H)}\bigr).
    \end{equation}
    We use induction on the number of operations used to construct
    $H$. For the initial rooted edge, $H^\ell=K_{1,\ell}$, so
    $\ex(n,H^\ell)=O(n)$, as required since $\rho(H)=1$.

    Suppose that~\eqref{eq:powers upper bound} holds for $H$ and
    let $\beta:=2-1/\rho(H)$. As $H^\ell$ is connected and
    bipartite, Theorem~\ref{thm:recursive upper bound} and
    \eqref{eq:rooted powers} give
    $$
        \ex(n,(H_k)^\ell)=\ex(n,(H^\ell)_k)
           =O\bigl(n^{1+1/(k+2-\beta)}\bigr).
    $$
    Since
    $$
        1+\frac{1}{k+2-\beta}
        =1+\frac{1}{k+1/\rho(H)}
        =2-\frac{(k-1)\rho(H)+1}{k\rho(H)+1}
        =2-\frac{1}{\rho(H_k)},
    $$
    this proves~\eqref{eq:powers upper bound} for $H_k$.

    Suppose now that $r\in[1,2)$ is rational. By
    Proposition~\ref{prop:rational densities}, there exists
    $H\in\mathcal{F}$ with $\rho(H)=1/(2-r)$.
    For every sufficiently large fixed $\ell$,
    Lemma~\ref{lem:lower bound} gives
    $\ex(n,H^\ell)=\Omega(n^r)$, while
    \eqref{eq:powers upper bound} gives
    $\ex(n,H^\ell)=O(n^r)$. Hence, 
    $\ex(n,H^\ell)=\Theta(n^r)$. Finally, non-bipartite graphs realise the
    exponent $2$. This completes the proof.

\section{The recursive upper bound} \label{sec:recursive}

\subsection{Proof outline}

In this section, we prove Theorem \ref{thm:recursive upper bound}.
The proof of this result is an adaptation of a method of Conlon, Janzer and Lee \cite{conlon2021more}. More precisely, it follows the proof of their Theorem 1.12, where they establish a tight upper bound for the extremal number of the graph obtained from a complete bipartite graph by replacing each edge by a path of length $k$ and adding a new vertex, joined to one part of the original complete bipartite graph. In both this proof and theirs, the key task is to control the number of short paths between pairs of vertices. Our host graph in Theorem \ref{thm:recursive upper bound} has roughly $n^{1+\frac{1}{k+2-\beta}}\leq n^{1+\frac{1}{k}}$ edges, so, if the graph was random-like, one would expect at most a constant number of paths of length at most $k$ between a given pair of vertices. If our host graph does indeed satisfy this ``pseudorandomness'' property, then it is not hard to prove that it contains the desired subgraph. The key lemma in \cite{conlon2021more} and also in this proof (see Lemma \ref{fewnotgood}) deals with the opposite ``structured'' case, where we do have many short paths between pairs of vertices. We now give a brief sketch of this argument.

Make the toy assumptions that for some length $2\leq \ell\leq k$, the following two properties hold: $(\star)$ between the endpoints of any path of length $\ell$, there are many other, internally vertex-disjoint paths of length $\ell$ (in particular any such path is contained in many cycles of length $2\ell$), but $(\star \star)$ there are no cycles of length less than $2\ell$ in the graph. In this case, we can find a copy of $H_k$ in our host graph as follows. Take a vertex $v\in V(G)$ and pick distinct vertices $u_1,\dots,u_t\in N(v)$, where $t$ is chosen such that $H\subset K_{t,t}$. Now take an arbitrary vertex $u^*\in N(v)$. By our assumption $(\star \star)$, if the graph is $d$-regular, then both the $(\ell-1)$th neighbourhood of $u^*$, $\Gamma_{\ell-1}(u^*)$, and the $(\ell-1)$th neighbourhood of $v$, $\Gamma_{\ell-1}(v)$, have size approximately $d^{\ell-1}$. On the other hand, our assumption $(\star)$ implies that for any path of the form $u^* v v_2 v_3 \dots v_{\ell}$ there are many internally vertex-disjoint paths of length $\ell$ between $u^*$ and $v_{\ell}$, so each vertex in $\Gamma_{\ell-1}(v)$ sends many edges to $\Gamma_{\ell-1}(u^*)$. In particular, we can find any small tree $T$ with one of its parts embedded into $\Gamma_{\ell-1}(v)$. Once such a tree is found, we can use the assumption $(\star)$ again to connect the leaves of the tree to the vertices $u_1,\dots,u_t$ by internally vertex-disjoint paths of length $\ell$. If $T$ is chosen suitably, the resulting structure will contain $H_k$ as a subgraph. More precisely, we will need to find the following tree.

\begin{definition} \label{def:tree}
	For positive integers $i$ and $t$, let $T_{i,t}$ be the tree defined by taking $t$ disjoint copies of an $(i-1)$-subdivided $t$-star and adding a new vertex adjacent to the center of each of these $t$ stars. Moreover, let $T_{0,t}$ denote the star with $t$ leaves. See Figure \ref{fig:tree} for an illustration.
\end{definition}

Note that the star with center $v$ and leaves $u_1,\dots,u_t$, together with the tree $T_{k-\ell,t}$ and disjoint paths of length $\ell$ connecting $u_1,\dots,u_t$ to the leaves of $T_{k-\ell,t}$, contains a copy of $H_k$. For an illustration, we refer the reader to Figure \ref{fig:embed F outline}, which shows the embedding in the case $H=K_{3,3}$.

\begin{figure}
\centering
\begin{tikzpicture}[
	vertex/.style={circle,fill=black,inner sep=0pt,minimum size=3pt},
	edge/.style={draw=black,line width=0.5pt},
	font=\small
	] 
	% The additional hub and its four neighbours (the star centres).
	\node[vertex,label=below right:{}] (hub) at (0,0) {};
	\foreach \angle [count=\j] in {0,90,180,270} {
		\node[vertex] (centre-\j) at (\angle:1.3) {};
		\draw[edge] (hub) -- (centre-\j);
		
		% Four pairwise disjoint arms from this centre.
		% Levels 1 and 2 are subdivision vertices; level 3 is a leaf.
		\foreach \offset [count=\a] in {-36,-12,12,36} {
			\begin{scope}[shift={(centre-\j)},rotate=\angle+\offset]
				\foreach \level in {1,2,3} {
					\node[vertex] (v-\j-\a-\level) at ({0.65*\level},0) {};
				}
			\end{scope}
			\draw[edge] (centre-\j)
			-- (v-\j-\a-1) -- (v-\j-\a-2) -- (v-\j-\a-3);
		}
	}
\end{tikzpicture}
\caption{A copy of $T_{3,4}$.}
\label{fig:tree}
\end{figure}

\begin{figure}
\centering
\begin{tikzpicture}[
	font=\small,
	vertex/.style={circle,fill=black,inner sep=0pt,minimum size=3.6pt},
	hub/.style={vertex,minimum size=4.4pt},
	edge/.style={draw=black,line width=.65pt},
	arm/.style={edge,draw=red,dash pattern=on 2.8pt off 1.6pt},
	added/.style={draw=addedpath,line width=.85pt,
		dash pattern=on 3.2pt off 1.8pt,
		preaction={draw=white,solid,line width=3pt}},
	every label/.style={font=\small,fill=white,inner sep=1pt},
	line cap=round
	]
	\definecolor{addedpath}{RGB}{27,99,157}
	% Main construction. The indices match r_{b,a} in the proof.
	\coordinate (v) at (0,0);
	\coordinate (w) at (10.6,0);
	\foreach \a in {1,2,3} {
		\coordinate (u-\a) at (1.55,{2.35*(2-\a)});
	}
	\foreach \b in {1,2,3} {
		\coordinate (b-\b) at (8.9,{2.35*(2-\b)});
		\foreach \a in {1,2,3} {
			\coordinate (r-\b-\a) at (6.75,{2.35*(2-\b)+.62*(2-\a)});
		}
	}
	
	% Direct hub edges and the arms of T_{i,3}.
	\foreach \a in {1,2,3} { \draw[edge] (v)--(u-\a); }
	\foreach \b in {1,2,3} {
		\draw[edge] (b-\b)--(w);
		\foreach \a in {1,2,3} { \draw[arm] (r-\b-\a)--(b-\b); }
	}
	
	% Nine pairwise internally disjoint length-ell paths. White under-strokes
	% indicate overcrossings; these are not graph vertices.
	\foreach \a in {3,2,1} {
		\foreach \b in {3,2,1} {
			\draw[added] (u-\a)
			.. controls ($(u-\a)+({1.1+.20*\b},{.45*(\a-\b)})$)
			and ($(r-\b-\a)+({-1.3-.16*\a},0)$)
			.. (r-\b-\a);
		}
	}
	
	% Place vertices and labels last so that they remain unobscured.
	\node[hub,label=left:{$v$}] at (v) {};
	\node[hub] at (w) {};
	\foreach \a in {1,2,3} {
		\ifnum\a=3
		\node[vertex,label=below left:{$u_{\a}$}] at (u-\a) {};
		\else
		\node[vertex,label=above left:{$u_{\a}$}] at (u-\a) {};
		\fi
	}
	\foreach \b in {1,2,3} {
		\ifnum\b=3
		\node[vertex] at (b-\b) {};
		\else
		\node[vertex] at (b-\b) {};
		\fi
		\foreach \a in {1,2,3} {
			\ifnum\a=3
			\node[vertex] at (r-\b-\a) {};
			\else
			\node[vertex] at (r-\b-\a) {};
			\fi
		}
	}
	
	% Equal font sizes and a common baseline for the two length labels.
	\node[anchor=base,text=addedpath,font=\small] at (4.15,3.7)
	{paths of length $\ell$};
	\node[anchor=base,text=red,font=\small] at (8.10,3.7)
	{paths of length $k-\ell$};
	
	% The bracket spans the tree, from its leaves to its additional hub w.
	\draw[decorate,decoration={brace,mirror,amplitude=5pt},line width=.5pt]
	(6.75,-3.65)--node[below=8pt] {$T_{k-\ell,3}$} (10.6,-3.65);
\end{tikzpicture}
\caption{The embedding of $(K_{3,3})_k$.}
\label{fig:embed F outline}
\end{figure}

In general, we cannot assume that the toy assumptions hold. However, as we will see, it will be sufficient to use slightly weaker versions of these assumptions to execute the embedding strategy.

\subsection{Reduction to a key lemma}

We now begin to set up the machinery, following Conlon, Janzer and Lee \cite{conlon2021more}, that will allow us to exploit a large collection of short paths between pairs of vertices, making our proof outline formal.

\begin{definition}
	Let $L$ be a positive integer. Define the function $f(\ell,L)$ for all positive integers $\ell$ recursively by setting $f(1,L)=L$ and, for $\ell\geq 2$,  $$f(\ell,L)=1+f(\ell-1,L)^{16}(\ell-1)^2\max_{1\leq i\leq \ell-1} f(i, L)f(\ell-i, L).$$
	Given this notation, we recursively define the notions of \emph{$L$-admissible} and \emph{$L$-good paths of length $\ell$} in a graph.
	Any path of length $1$ is both $L$-admissible and $L$-good. For $\ell\geq 2$, we say a path $P=v_0v_1\dots v_\ell$ is $L$-admissible if every proper subpath of $P$ is $L$-good, i.e., $v_iv_{i+1}\dots v_j$ is $L$-good for every $(i,j)\neq (0,\ell)$. A path $P$ is $L$-good if it is $L$-admissible and the number of $L$-admissible paths of length $\ell$ between $v_0$ and $v_\ell$ is at most $f(\ell,L)$.
\end{definition}

The function $f(\ell, L)$ was defined so that the following lemma holds. We note that this already appeared as {\cite[Lemma 6.3]{conlon2021more}}, though we include the short proof for completeness. 

\begin{lemma} \label{manydisjoint}
	If a path $P=v_0\dots v_\ell$ is $L$-admissible, but not $L$-good, then there exist at least $f(\ell-1,L)^{16}$ pairwise internally vertex-disjoint $L$-admissible paths of length $\ell$ from $v_0$ to $v_\ell$.
\end{lemma}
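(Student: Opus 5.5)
Since $P$ is $L$-admissible but not $L$-good, there are more than $f(\ell,L)$ $L$-admissible paths of length $\ell$ from $v_0$ to $v_\ell$. Let $\mathcal{P}$ be this collection, so $|\mathcal{P}|\ge f(\ell,L)=1+f(\ell-1,L)^{16}(\ell-1)^2\max_{i} f(i,L)f(\ell-i,L)$. I plan to choose greedily a maximal family $\mathcal{Q}\subseteq\mathcal{P}$ of pairwise internally vertex-disjoint paths. If $|\mathcal{Q}|\ge f(\ell-1,L)^{16}$ we are done. Otherwise, the set $I$ of internal vertices used by paths in $\mathcal{Q}$ has size less than $(\ell-1)f(\ell-1,L)^{16}$. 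By maximality, every path in $\mathcal{P}$ meets $I$ in an internal vertex.

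The key step is to show that each internal vertex $w$, placed at each position $i\in\{1,\dots,\ell-1\}$, lies on few paths of $\mathcal{P}$. Let $Q=v_0u_1\dots u_{\ell-1}v_\ell\in\mathcal{P}$ with $u_i=w$. Since $Q$ is $L$-admissible, its proper subpaths $v_0u_1\dots u_i$ and $u_i\dots u_{\ell-1}v_\ell$ are $L$-good. The number of choices for the first segment is therefore at most the number of $L$-admissible paths of length $i$ from $v_0$ to $w$. That number is at most $f(i,L)$: the segment itself is $L$-good, so by definition there are at most $f(i,L)$ admissible paths of length $i$ between its endpoints. If no such good segment exists, the count is zero. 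Likewise, the second segment has at most $f(\ell-i,L)$ choices. Hence $w$ at position $i$ lies on at most $f(i,L)f(\ell-i,L)$ paths of $\mathcal{P}$.

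Summing over $w\in I$ and over the $\ell-1$ positions gives
\[
|\mathcal{P}|< (\ell-1)f(\ell-1,L)^{16}\cdot(\ell-1)\max_i f(i,L)f(\ell-i,L)=f(\ell,L)-1,
\]
which contradicts $|\mathcal{P}|\ge f(\ell,L)$. This completes the argument.

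The only delicate point is the bookkeeping of lengths. Both segments are proper subpaths of an admissible path, so goodness applies, including in the degenerate case where a segment has length $1$ and $f(1,L)=L\ge 1$. Also, once the two segments are fixed, the concatenated path is determined. I expect no real obstacle here; the $16$th power is far more than this argument needs.
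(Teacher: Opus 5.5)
Your proof is correct and follows essentially the same route as the paper. Both take a maximal family of internally disjoint admissible paths and bound the number of admissible paths through each (internal vertex, position) pair by $f(i,L)f(\ell-i,L)$, using that both halves are $L$-good; the paper phrases the final count as a pigeonhole step rather than your direct summation, but the argument is the same.
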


\begin{proof}
	Choose a maximal set of pairwise internally vertex-disjoint $L$-admissible paths of length $\ell$ from $v_0$ to $v_\ell$. Call them $Q_1,\dots,Q_r$ and assume that $r<f(\ell-1,L)^{16}$. Every $L$-admissible path of length $\ell$ from $v_0$ to $v_\ell$ meets one of the paths $Q_1,\dots,Q_r$ at some vertex other than $v_0$ and $v_\ell$. But $P$ is not $L$-good, so there are at least $f(\ell,L)$ such paths. By the pigeonhole principle, it follows that there exist a vertex $w$ and some $1\leq i\leq \ell-1$ such that there are at least $\frac{f(\ell,L)}{f(\ell-1,L)^{16}(\ell-1)^2}$ $L$-admissible paths $x_0x_1\dots x_\ell$ with $x_0=v_0, x_i=w, x_\ell=v_\ell$. But $\frac{f(\ell,L)}{f(\ell-1,L)^{16}(\ell-1)^2}>f(i, L)f(\ell-i, L)$, so either there are more than $f(i, L)$ $L$-good paths of length $i$ from $v_0$ to $w$ or there are more than $f(\ell-i, L)$ $L$-good paths of length $\ell-i$ from $w$ to $v_\ell$. In either case we contradict the definition of an $L$-good path.
\end{proof}

\medskip

The key lemma, which is a slight variant\footnote{The difference is that in this lemma we add two new vertices to the subdivided $K_{t,t}$, one joined to each part, whereas in \cite{conlon2021more} there was only one new vertex, joined to one of the parts.} of {\cite[Lemma 6.4]{conlon2021more}}, is as follows. Roughly speaking, it states that if a constant proportion of the paths of length $k$ in our graph contain pairs of vertices with many short paths between them, then we can find the forbidden subgraph.

\begin{lemma} \label{fewnotgood}
	Let $k\geq 2$ and $t$ be positive integers and $L$ be a sufficiently large constant in terms of $k$ and $t$. Let $G$ be a $(K_{t,t})_k$-free $d$-regular graph on $n$ vertices with $d=\omega(1)$. Then, for all sufficiently large $n$, the number of paths of length $k$ which are not $L$-good is at most $\frac{1}{10}nd^k$.
\end{lemma}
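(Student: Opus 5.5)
We follow the strategy of \cite[Lemma 6.4]{conlon2021more}, arguing by contradiction. Suppose more than $\frac{1}{10}nd^k$ paths of length $k$ are not $L$-good. Every path of length $1$ is good, so each bad path $P$ of length $k$ contains a \emph{minimal} bad subpath. This is a subpath $Q$ of some length $2\le \ell\le k$ that is not $L$-good but all of whose proper subpaths are good, so $Q$ is $L$-admissible. By pigeonhole over the $O(k^2)$ choices of $\ell$ and the position of $Q$ inside $P$, there is a fixed $\ell$ and position such that $\Omega_k(nd^k)$ paths of length $k$ have an admissible non-good subpath of length $\ell$ there. Since $G$ is $d$-regular, a path of length $\ell$ extends to at most $d^{k-\ell}$ paths of length $k$ in a fixed position. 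Hence there are $\Omega(nd^\ell)$ admissible, non-good paths of length $\ell$.

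By Lemma~\ref{manydisjoint}, each such path $v_0\dots v_\ell$ has at least $f(\ell-1,L)^{16}$ pairwise internally disjoint admissible paths of length $\ell$ between its endpoints. This is the robust form of $(\star)$. We replace $(\star\star)$ by goodness of shorter paths. Every admissible path has all its proper subpaths $L$-good, so between the endpoints of any proper subpath there are at most $f(i,L)$ admissible paths of length $i<\ell$. This bounded multiplicity of shorter admissible paths is what gives the ``expansion'' of $(\ell-1)$th neighbourhoods in the outline: the walks used in the embedding are spread over $\Omega(d^{\ell-1})$ vertices instead of collapsing onto a few.

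Next we clean up. We pass to a vertex $v$ and a neighbour $u^*$ such that a constant fraction of the roughly $d^{\ell}$ paths $u^*v v_2\dots v_\ell$ are admissible and non-good; averaging gives such a pair. Deleting paths that use low-multiplicity structures, we build an auxiliary bipartite graph between endpoints reachable from $v$ and from $u^*$. In this graph each vertex $v_\ell$ is joined, via many disjoint length-$\ell$ paths, back to $u^*$, and through $u^*$ to other endpoints.

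Inside this structure we embed the tree $T_{k-\ell,t}$, whose $t^2$ leaves must be connected to distinct neighbours $u_1,\dots,u_t$ of $v$. Two new hub vertices are needed, one for each side of $K_{t,t}$. The hub joined to the $u_i$ is $v$ itself. The other hub is the root of $T_{k-\ell,t}$, which is attached to the $t$ star centres. The tree is found greedily using minimum-degree properties of the cleaned structure. Each vertex of the tree is then connected to one of the $u_i$ by internally disjoint admissible length-$\ell$ paths, chosen greedily: at each step only $O_{k,t}(1)$ vertices are forbidden, and there are $f(\ell-1,L)^{16}\gg$ (number of forbidden vertices) $\cdot$ (the multiplicity bound $f(i,L)$ on paths through any fixed vertex) disjoint candidates. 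The result contains $(K_{t,t})_k$, a contradiction.

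The main obstacle is the cleaning step. We need a single vertex $v$ whose neighbourhood structure simultaneously supports the tree and connects back to $t$ distinct $u_i$, and we need the many disjoint paths from different leaves to avoid one another and the tree. The first thing to try is the dependent-random-choice-style counting from \cite{conlon2021more}: count $(k+1)$-vertex configurations and use the bounds $f(i,L)$ to show that degenerate configurations, where paths overlap, account for only $O(d^{\ell-1})$ of the $\Theta(d^\ell)$ paths at a typical vertex. Because $d=\omega(1)$, these degenerate configurations are negligible, and taking $L$ large relative to $k,t$ makes the disjointness greedy step work.
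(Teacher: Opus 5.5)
Your reduction is correct and coincides with the paper's. A minimal non-good subpath is admissible. Pigeonholing over its length and position, and then over its second vertex, gives a vertex $v$ with at least $c_k d^\ell$ admissible non-good paths $v_0vv_2\dots v_\ell$, where $c_k$ is independent of $L$. Since $vv_2\dots v_\ell$ is good, each pair $(v_0,v_\ell)$ arises from at most $f(\ell-1,L)$ of these paths, so by Lemma~\ref{manydisjoint} there are at least $d^\ell/f(\ell-1,L)^2$ heavy pairs in $N(v)\times\Gamma_{\ell-1}(v)$ once $L$ is large. Your target embedding is also right.

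The gap is the step that actually produces $(K_{t,t})_k$. You need a single copy of $T_{k-\ell,t}$ in $G-v$, together with $t$ distinct $u_1,\dots,u_t\in N(v)$ off the tree, such that $(u_a,r)$ is heavy for every $a$ and every leaf $r$. You yourself call this ``the main obstacle'' and only say what you would try.

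The remedy you suggest targets the wrong difficulty. Overlaps among the connecting paths cost nothing: the $f(\ell-1,L)^{16}$ paths of a heavy pair are internally disjoint, so each of the $O_{k,t}(1)$ forbidden vertices blocks at most one of them, and no multiplicity factor is needed. What is missing is a reason why many neighbours of $v$ should be heavy to the \emph{same} leaves, and a density estimate showing that the tree exists at all. Your averaging over pairs $(u^*,v)$ does not help. For fixed $u^*,v$ there are at most $d^{\ell-1}$ (not $d^\ell$) paths $u^*vv_2\dots v_\ell$, and fixing $u^*$ this way discards all information about the other neighbours of $v$.

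The paper fills this with Lemma~\ref{findstructure}, applied with $X=N(v)$, $Y=\Gamma_{\ell-1}(v)$ and $f=f(\ell-1,L)$.
\begin{enumerate}
\item First restrict to $Y'\subset Y$, the vertices heavy to at least $|X|/(2f^2)$ elements of $X$. Only then pick $x^*\in X$ (your $u^*$) heavy to a set $U_0\subset Y'$ with $|U_0|\ge d^{\ell-1}/(4f^6)$.
\item Each $u\in U_0$ has $f^{16}$ distinct neighbours in $W_0=\Gamma_{\ell-1}(x^*)$, namely the second vertices of its disjoint paths to $x^*$, while $|W_0|\le d^{\ell-1}$. So the bipartite graph between disjoint parts of $U_0$ and $W_0$ has average degree at least $f^9$. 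This is the missing quantitative content of your auxiliary graph.
\item In this graph, embed an oversized tree $T_{k-\ell,t_0}$ with $t_0=f^4$, avoiding $v$, with leaves in $U_0$.
\item Since all leaves lie in $Y'$, double counting yields at least $|X|/(4f^2)=\omega(1)$ vertices $x\in X$, each heavy to at least a $1/(4f^2)$ fraction of the leaves. Hence each such $x$ is heavy to all leaves of some sub-copy $T(x)\cong T_{k-\ell,t}$.
\item There are only $O(1)$ such sub-copies, so pigeonhole gives $X'\subset X$ of size $\omega(1)$ with a common $T(x)$. Take $u_1,\dots,u_t$ from $X'$ and join each leaf $r_{b,a}$ to $u_a$.
\end{enumerate}
This restrict-to-$Y'$, oversize-then-pigeonhole argument, which uses $|N(v)|=d=\omega(1)$ in an essential way, is the idea your proposal lacks.
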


We defer the proof of Lemma~\ref{fewnotgood} to the next subsection and first show how to conclude the proof of Theorem~\ref{thm:recursive upper bound}.

We start with the following immediate corollary of Lemma \ref{fewnotgood}.

\begin{lemma} \label{lem:manygood}
    Let $H$ be a connected bipartite graph, $k$ be a positive integer and $L$ be a sufficiently large constant in terms of $|H|$ and $k$. Let $G$ be an $H_k$-free $d$-regular graph on $n$ vertices with $d=\omega(1)$. Then, for all sufficiently large $n$, the number of paths of length $k+2$ in which every subpath of length $k$ is $L$-good is at least $\frac{1}{10}nd^{k+2}$.
\end{lemma}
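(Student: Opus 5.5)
Since $H$ is connected and bipartite, we fix $t$ with $H\subseteq K_{t,t}$, chosen so that the bipartition of $H$ sits inside the two parts of $K_{t,t}$. Then $H_k$ is a subgraph of $(K_{t,t})_k$: the subdivided edges of $H$ are among those of $K_{t,t}$, and the two extra vertices joined to $A$ and $B$ extend to the extra vertices joined to the two full parts. Hence every $H_k$-free graph $G$ is also $(K_{t,t})_k$-free. For $k\ge 2$, Lemma~\ref{fewnotgood} applies, with $L$ large in terms of $k$ and $t$, and hence in terms of $|H|$ and $k$. It shows that at most $\frac{1}{10}nd^k$ paths of length $k$ in $G$ are not $L$-good. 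For $k=1$ the claim is trivial, because every path of length $1$ is $L$-good by definition. So all $nd^{3}(1-o(1))$ paths of length $3$ qualify.

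The plan is a simple double-counting. In a $d$-regular graph with $d=\omega(1)$, the number of paths of length $k+2$ (counted as sequences of distinct vertices) is $nd(d-1)\cdots(d-k-1)=(1-o(1))nd^{k+2}$. A path $v_0\dots v_{k+2}$ fails the property only if one of its three subpaths of length $k$, namely $v_jv_{j+1}\dots v_{j+k}$ for $j=0,1,2$, is not $L$-good. We bound the number of failing paths separately for each position $j$.

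Fix $j$ and fix a non-$L$-good path $Q$ of length $k$. The number of paths of length $k+2$ containing $Q$ in positions $j,\dots,j+k$ is at most $d^2$, since we extend by $j$ vertices at one end and $2-j$ at the other, with at most $d$ choices each. Hence the failures at position $j$ number at most $\frac{1}{10}nd^k\cdot d^2$. If paths are counted with orientation, we also account for the reversal symmetry, which at worst doubles the count. Either way, the total number of bad paths is at most $3\cdot\frac{1}{10}nd^{k+2}$, up to this factor of $2$ for orientation conventions.

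This leaves at least $(1-o(1)-\tfrac{3}{10})nd^{k+2}$ good paths, or $(1-o(1)-\tfrac{6}{10})nd^{k+2}$ in the worst orientation convention. In both cases this is at least $\frac{1}{10}nd^{k+2}$ for large $n$.

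The only point requiring care is this consistency of path counting: whether paths are oriented or unoriented must be the same in Lemma~\ref{fewnotgood} and here. The worst-case factor $2$ is still absorbed, since $1-\frac{6}{10}>\frac{1}{10}$. Apart from that, the main step is simply checking that $H_k\subseteq (K_{t,t})_k$, including the placement of the two hub vertices, which is immediate from the definition.
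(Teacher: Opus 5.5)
Your proposal is correct and is essentially the paper's proof. You embed $H_k$ in $(K_{t,t})_k$, apply Lemma~\ref{fewnotgood} for $k\ge 2$ (with $k=1$ trivial), and bound the paths of length $k+2$ that have a non-good subpath of length $k$ by $3d^2\cdot\frac{1}{10}nd^k$, against a total of order $nd^{k+2}$. Two small points: the orientation hedge is unnecessary, since the counts behind Lemma~\ref{fewnotgood} are of oriented sequences, exactly as here; and the total is at least, rather than exactly, $nd(d-1)\cdots(d-k-1)$. Neither affects the conclusion.
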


\begin{proof}
    The statement is trivial if $k=1$ since every path of length $1$ is $L$-good, so we may assume that $k\geq 2$. Choose $t=|H|$ and note that then $G$ is $(K_{t,t})_k$-free. By Lemma \ref{fewnotgood}, for large enough $L$, the number of paths of length $k$ in $G$ which are not $L$-good is therefore at most $\frac{1}{10}nd^k$. As $G$ is $d$-regular, any path of length $k$ can be extended in at most $3d^2$ ways to a path of length $k+2$, where the factor $3$ comes from choosing the position of the subpath of length $k$. Hence, the number of paths of length $k+2$ with a non-good subpath of length $k$ is at most $\frac{3}{10}nd^{k+2}$. Since the total number of paths of length $k+2$ in $G$ is at least $\frac{2}{5}nd^{k+2}$, the lemma follows.
\end{proof}

The goodness condition bounds the number of admissible paths of a given length between the endpoints of each subpath. We now pass to a collection in which every subpath of length at most $k$ is uniquely determined by its endpoints. This property is captured by the following definition.

\begin{definition}
    We call a collection $\mathcal{P}$ of paths of length $k+2$ in a graph $G$ \emph{diverse} if there exists a partition $V(G)=V_0\cup V_1\cup \dots \cup V_{k+2}$ such that each $P\in \mathcal{P}$ is of the form $v_0v_1\dots v_{k+2}$ for some $v_i\in V_i$ and, for any distinct $u_0\dots u_{k+2},v_0\dots v_{k+2}\in \mathcal{P}$ and indices $0\leq i<j\leq k+2$ with $j-i\leq k$, if $u_i=v_i$ and $u_j=v_j$, then $u_q=v_q$ for all $i\leq q\leq j$. In other words, if two paths intersect twice at distance at most $k$, then they induce the same subpath between these intersections.
\end{definition}

\begin{lemma} \label{lem:obtain diverse}
    For every positive integer $k$ and $L$, there is a positive constant $c$ such that if $\mathcal{P}_0$ is a collection of paths of length $k+2$ in which every subpath of length $k$ is $L$-good, then there is a subcollection $\mathcal{P}\subset \mathcal{P}_0$ of size at least $c|\mathcal{P}_0|$ which is diverse.
\end{lemma}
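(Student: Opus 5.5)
Our plan is to combine a random partition with a greedy deletion argument driven by the goodness bounds. First we choose the partition $V(G)=V_0\cup\dots\cup V_{k+2}$ uniformly at random, placing each vertex independently in one of the $k+3$ parts. Each path $v_0\dots v_{k+2}\in\mathcal{P}_0$ has distinct vertices, so it satisfies $v_i\in V_i$ for all $i$ with probability $(k+3)^{-(k+3)}$. Fixing a partition that achieves at least the expected count, we obtain a subcollection $\mathcal{P}_1$ with $|\mathcal{P}_1|\ge (k+3)^{-(k+3)}|\mathcal{P}_0|$ in which every path is correctly positioned. The point of the partition is that any intersection of two such paths between positions $i$ and $j$ must occur at the same positions in both paths.

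The second step is to encode the diversity condition as a bounded-degree conflict structure and select an independent set. For a path $P\in\mathcal{P}_1$, a pair of indices $i<j$ with $j-i\le k$ and $j-i\ge 2$, and the subpath $P[i,j]=v_i\dots v_j$ (which has length at most $k$), goodness says the following. The subpath $P[i,j]$ is a subpath of some length-$k$ subpath of $P$, and every such length-$k$ subpath is $L$-good. Hence $P[i,j]$ is $L$-good, since a good path is admissible, so all its proper subpaths are good. Therefore there are at most $f(j-i,L)$ admissible paths of length $j-i$ from $v_i$ to $v_j$. Every subpath $Q[i,j]$ of another path $Q\in\mathcal{P}_1$ with the same endpoints is again good, hence admissible. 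So the set $\{Q[i,j]:Q\in\mathcal{P}_1,\ Q_i=v_i,\ Q_j=v_j\}$ has size at most $F:=\max_{m\le k} f(m,L)$.

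We then define classes. For each triple $(i,j,\{v_i,v_j\})$, the paths of $\mathcal{P}_1$ through $v_i$ at position $i$ and $v_j$ at position $j$ split into at most $F$ classes according to their segment between $i$ and $j$. Diversity requires that, for each such triple, all surviving paths lie in a single class. This is the main obstacle: the conflicts are not bounded in number, since one class may contain many paths. Consequently, a naive conflict graph has unbounded degree, and a plain greedy independent set does not work.

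To handle this, we use a random colouring. For every pair $(i,j)$ with $2\le j-i\le k$ and every pair of endpoints $(x,y)\in V_i\times V_j$, we choose independently a uniformly random label in $\{1,\dots,F\}$. We also fix, in advance, an arbitrary injective labelling of the at most $F$ distinct admissible $x$–$y$ segments of length $j-i$. A path $P$ survives if, for every such pair $(i,j)$, the label of its segment $P[i,j]$ equals the random label chosen for $(v_i,v_j)$. There are at most $\binom{k+3}{2}$ pairs per path, so each path survives with probability at least $F^{-\binom{k+3}{2}}$. Surviving paths automatically satisfy diversity: two survivors sharing $v_i$ and $v_j$ have segments with the same label, hence the identical segment, hence $u_q=v_q$ for all $i\le q\le j$. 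The cases $j-i=1$ are trivial. Taking an outcome achieving the expectation gives
\[
|\mathcal{P}|\ge (k+3)^{-(k+3)}F^{-\binom{k+3}{2}}|\mathcal{P}_0|,
\]
so the lemma holds with $c=(k+3)^{-(k+3)}F^{-\binom{k+3}{2}}$, which depends only on $k$ and $L$.

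The only delicate point to verify is that every segment of length at most $k$ of each path is genuinely $L$-good, as used above. This holds because such a segment lies inside some length-$k$ subpath of a path of length $k+2$, and $L$-goodness is inherited by subpaths via admissibility.
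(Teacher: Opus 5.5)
Your proposal is correct and follows essentially the same route as the paper. You use a random partition into $V_0,\dots,V_{k+2}$ to fix positions, and then make an independent random choice of one permitted segment for each endpoint pair $(x,y)\in V_i\times V_j$, with goodness bounding the number of candidate segments by $f(j-i,L)$. Your uniform label in $\{1,\dots,F\}$ with a fixed injective labelling is just a repackaging of the paper's step: it samples one of the at most $f(j-i,L)$ segments uniformly and keeps only paths all of whose short subpaths are the sampled ones.
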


\begin{proof}
    Consider a random partition $V_0\cup \dots \cup V_{k+2}$ of the vertex set of the graph. Note that the probability a given path $v_0 \dots v_{k+2}$ satisfies that $v_i\in V_i$ for every $i$ is at least some positive constant $c_0$ depending on $k$. Let $\mathcal{P}_1\subset \mathcal{P}_0$ be the subset of paths satisfying this property. Clearly, we may fix a choice of the partition for which $|\mathcal{P}_1|\geq c_0|\mathcal{P}_0|$. Note that for each $P\in \mathcal{P}_1$, every subpath of $P$ of length $\ell\leq k$ is $L$-good, meaning that for any such subpath, there are at most $f(\ell,L)$ $L$-admissible paths of length $\ell$ between its endpoints. It follows that for each $0\leq i<j\leq k+2$ with $j-i\leq k$, $v_i\in V_i$ and $v_j\in V_j$, there are at most $f(j-i,L)$ different paths between $v_i$ and $v_j$ appearing as the intersection of a member of $\mathcal{P}_1$ with $V_i\cup \dots \cup V_j$. For each pair of vertices $(v_i,v_j)\in V_i\times V_j$, sample one such path independently and uniformly at random (if one exists) and call the sampled paths \emph{allowed}. Now let $\mathcal{P}\subset \mathcal{P}_1$ be the subcollection of paths in which every subpath of length at most $k$ is allowed. Note that each $P\in \mathcal{P}_1$ has $O_k(1)$ subpaths of length at most $k$ and each of them is allowed independently with probability at least $1/f(k,L)$. Hence, the probability that a given $P\in \mathcal{P}_1$ remains in $\mathcal{P}$ is at least some positive constant depending on $k$ and $L$. Thus, with positive probability, we have $|\mathcal{P}|\geq c|\mathcal{P}_0|$ for some positive constant $c$ depending on $k$ and $L$. Since $\mathcal{P}$ is diverse (with respect to the partition $V_0\cup \dots \cup V_{k+2}$), the proof of the lemma is complete.
\end{proof}

Combining Lemmas \ref{lem:manygood} and \ref{lem:obtain diverse}, we can conclude the existence of a large diverse collection of paths.

\begin{lemma} \label{lem:many diverse}
    Let $H$ be a connected bipartite graph and $k$ be a positive integer. Let $G$ be an $H_k$-free $d$-regular graph on $n$ vertices with $d=\omega(1)$. Then, for all sufficiently large $n$, there is a diverse collection $\mathcal{P}$ of paths of length $k+2$ with $|\mathcal{P}|\geq cnd^{k+2}$, where $c$ is a positive constant depending only on $|H|$ and $k$.
\end{lemma}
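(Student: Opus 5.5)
This is a direct combination of Lemma~\ref{lem:manygood} and Lemma~\ref{lem:obtain diverse}, with the constants fixed in the right order.

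First we fix $L$. Let $L=L(|H|,k)$ be large enough that Lemma~\ref{lem:manygood} applies to $H$ and $k$. This $L$ depends only on $|H|$ and $k$, since the requirement in Lemma~\ref{lem:manygood}, ultimately coming from Lemma~\ref{fewnotgood} with $t=|H|$, is stated in terms of $|H|$ and $k$ alone.

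Next we apply Lemma~\ref{lem:manygood} to the $H_k$-free $d$-regular graph $G$ with $d=\omega(1)$. For all sufficiently large $n$, it gives a collection $\mathcal{P}_0$ of paths of length $k+2$ with $|\mathcal{P}_0|\geq \frac{1}{10}nd^{k+2}$, in which every subpath of length $k$ is $L$-good.

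We then apply Lemma~\ref{lem:obtain diverse} to $\mathcal{P}_0$ with this $k$ and $L$. It produces a diverse subcollection $\mathcal{P}\subset\mathcal{P}_0$ with $|\mathcal{P}|\geq c'|\mathcal{P}_0|\geq \frac{c'}{10}nd^{k+2}$, where $c'$ depends only on $k$ and $L$. Setting $c:=c'/10$, the constant depends only on $|H|$ and $k$, as claimed.

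Lemma~\ref{lem:obtain diverse} assumes only that every subpath of length exactly $k$ is good, but its proof uses that every subpath of length $\ell\leq k$ is $L$-good. This is supplied by the definitions: an $L$-good path is $L$-admissible, so all its proper subpaths are $L$-good, and by induction every subpath of a good path is good. Hence every subpath of length at most $k$ of a path in $\mathcal{P}_0$ is $L$-good. If $k=1$, every path of length $1$ is good and the collection of all paths of length $3$ already has size at least $\frac{1}{10}nd^{3}$.

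There is no real obstacle here. The only points needing care are that $L$ is chosen before invoking Lemma~\ref{lem:obtain diverse}, so that $c$ depends only on $|H|$ and $k$, and the subpath-heredity of goodness noted above.
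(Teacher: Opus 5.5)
Your proposal is correct and is exactly the paper's argument, which obtains this lemma by fixing $L$ in terms of $|H|$ and $k$, applying Lemma~\ref{lem:manygood}, and then applying Lemma~\ref{lem:obtain diverse} to get $c=c'/10$. Your remark that goodness passes to subpaths, so that every subpath of length at most $k$ is $L$-good, correctly spells out a step the paper uses implicitly in the proof of Lemma~\ref{lem:obtain diverse}.
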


The last ingredient in the proof of Theorem \ref{thm:recursive upper bound} is the following result of Chakraborti, Janzer, Methuku and Montgomery \cite{chakraborti2026regular} that allows us to assume that our host graph is regular. We remark that one could instead rely on a more classical almost-regularization lemma of Erd\H os and Simonovits \cite{erdossim70,jiang2012turan}, but this would lead to slightly more complicated statements in our technical lemmas.

\begin{lemma}[Chakraborti--Janzer--Methuku--Montgomery \cite{chakraborti2026regular}]\label{regularization}
    For any real $0<\varepsilon < 1$, there exists $\delta > 0$ such that the following holds. Suppose $c > 0$ and $n$ is a positive integer that is sufficiently large in terms of $\varepsilon$ and $c$. If $G$ is a graph on $n$ vertices with $e(G) \geq c n^{ 1 + \varepsilon}$, then $G$ contains a regular subgraph $G'$ on $m \geq n^{\delta}$ vertices such that $e(G') \geq \delta c m^{ 1 + \varepsilon}$. 
\end{lemma}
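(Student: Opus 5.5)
Lemma~\ref{regularization} is quoted from~\cite{chakraborti2026regular} and is used as a black box, so what follows is only an outline of how I would expect a proof to go. It has two stages: first make the graph almost regular without losing the exponent, then pass from almost regular to exactly regular while losing only a constant factor in the degree.

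\emph{Step 1: almost-regularisation.} Apply the Erd\H{o}s--Simonovits almost-regularisation lemma~\cite{erdossim70,jiang2012turan} to $G$. It gives a subgraph $G_1$ on $m_1\geq n^{\delta_1}$ vertices with $e(G_1)\geq c_1 c\, m_1^{1+\varepsilon}$, and with maximum degree at most $K$ times its minimum degree. Here $\delta_1,c_1,K$ depend only on $\varepsilon$. Next take a bipartite subgraph with at least half the edges, and clean it by repeatedly deleting low-degree vertices. This yields a bipartite graph $G_2=(X,Y)$ in which every vertex has degree between $D/K'$ and $K'D$, where $D=\Theta(c\,m_1^{\varepsilon})$. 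Counting edges from each side then forces $|X|$ and $|Y|$ to agree up to a factor $K'^2$.

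\emph{Step 2: exactly regular subgraph.} Set $r=\lfloor D/(CK'^2)\rfloor$ for a large constant $C$. I want a subgraph of $G_2$ in which every non-isolated vertex has degree exactly $r$. Its non-isolated part is then $r$-regular. It has $m'\leq m_1$ vertices and $e\geq r m'/2=\Omega(c\,m_1^{\varepsilon})\,m'\geq\delta c\, (m')^{1+\varepsilon}$, since $m'\le m_1$. For the vertex count, $m'\geq r\geq m_1^{\varepsilon}/C'$, which is polynomial in $n$. The natural tool is a flow or $f$-factor argument. Choose $X'\subseteq X$ and $Y'\subseteq Y$ with $r|X'|=r|Y'|$, that is $|X'|=|Y'|$, and ask for a spanning $r$-regular subgraph of $G_2[X',Y']$. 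By max-flow/min-cut (the Gale--Ryser/Ore criterion) such a subgraph exists provided that, for all $S\subseteq X'$ and $T\subseteq Y'$,
\[e(S,T)\geq r\bigl(|S|+|T|-|X'|\bigr).\]
The degree bounds $D/K'\leq \deg\leq K'D$ together with $r\ll D/K'^2$ make this cut condition plausible when $S$ and $T$ are large. So the plan is to select $X'$ and $Y'$ as random subsets of equal size and verify the condition with high probability.

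\emph{Main obstacle.} The cut condition can fail for structural reasons, not just because of unlucky random choices. If $G_2$ is far from expanding (say a disjoint union of unbalanced bipartite pieces), no choice of equal-size $X'$ and $Y'$ works globally. To deal with this, I would first pass to a piece where the two sides are balanced locally. The idea is to iteratively split $G_2$ along sparse cuts, keeping a piece that retains a constant fraction of the density. Each piece must have balanced sides measured by degree-weighted size, and this has to be done without logarithmic losses, which the target bound $\delta c\, m^{1+\varepsilon}$ does not permit. A cruder regularisation gives average degree $\Omega(d/\log n)$, but with $d=m^{\varepsilon}$ that loss cannot be absorbed. Avoiding it is exactly where the substance of~\cite{chakraborti2026regular} lies, and I expect this step to require their more delicate density-increment argument rather than a one-shot flow computation.
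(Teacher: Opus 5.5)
The paper quotes this lemma from~\cite{chakraborti2026regular} without proof, so there is no argument in the paper to compare against. Judged as a proof, your proposal has a genuine gap, and it sits at the only nontrivial point.

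Your Step~1 and the bookkeeping in Step~2 are sound. The Erd\H{o}s--Simonovits lemma, in the Jiang--Seiver form with constants depending only on $\varepsilon$, gives a $K$-almost-regular subgraph without losing the exponent. An $r$-regular subgraph with $r=\Omega_K(D)$ would then give $e(G')=rm'/2\geq\delta c\,(m')^{1+\varepsilon}$ via $m'\leq m_1$. It would also give $m'\geq r=\Omega(c\,m_1^{\varepsilon})\geq n^{\delta}$; you dropped the factor $c$ there, which is harmless since $n$ is large in terms of $c$.

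What is missing is a proof of the statement you reduce to: every $K'$-almost-regular graph with degrees of order $D$ contains an exactly $r$-regular subgraph with $r=\Omega_{K'}(D)$. That statement is the entire content of the lemma beyond Erd\H{o}s--Simonovits. It is the key auxiliary theorem of~\cite{chakraborti2026regular}: every $K$-almost-regular graph of average degree $d$ contains an $r$-regular subgraph for some $r=\Omega_K(d)$. There it is proved using a new random process that extracts a very nearly regular subgraph, and combining it with exactly your Step~1 gives the lemma.

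The route you sketch for this step fails as stated. For randomly chosen $X',Y'$, the cut condition $e(S,T)\geq r(|S|+|T|-|X'|)$ is violated whenever some $S\subseteq X'$ sends fewer than $r(|S|-|B|)$ edges outside a set $B\subseteq Y'$ with $|B|<|S|$, or the same with the sides exchanged.

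Almost-regularity with a constant ratio $K'$ allows nearly closed pieces of this kind with $|B|$ as small as about $|S|/K'^2$, and with imbalances in both directions, as in $K_{a,2a}\sqcup K_{2a,a}$. Sampling the two sides at fixed rates rescales all these imbalances by the same factor, so no concentration estimate can repair them. This obstructs structure-blind choices of $X',Y'$, not existence: in the example, a suitable choice gives $K_{a,a}\sqcup K_{a,a}$. So what is really needed is a selection mechanism that adapts to local imbalance at every scale while losing only a constant factor in the degree.

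Your suggested remedy, splitting along sparse cuts and recursing, is not specified. As you observe yourself, the natural recursive versions lose logarithmic factors that the bound $\delta c\,m^{1+\varepsilon}$ cannot absorb. So the proposal correctly reduces the lemma to the almost-regular-to-regular theorem but gives no argument for that theorem.
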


We are now ready to prove Theorem \ref{thm:recursive upper bound}. Given all this preparation, the proof of the theorem now resembles (and generalizes) the classical Erd\H os--Simonovits result mentioned in the introduction that they developed in order to give an upper bound on the extremal number of the cube.

\begin{proof}[Proof of Theorem \ref{thm:recursive upper bound}]
	Let $H$ be a connected bipartite graph with $\ex(n, H) =  O(n^{\beta})$. Let $G$ be an $n$-vertex graph with $\omega(n^{1 + \frac{1}{k + 2 - \beta}})$ edges. We seek to show that $G$ contains a copy of $H_k$, which will then imply that $\ex(n, H_k) = O(n^{ 1 + \frac{1}{k + 2 - \beta}})$. Suppose on the contrary that $G$ is $H_k$-free. 

    By Lemma~\ref{regularization}, $G$ has a $d$-regular subgraph $G'$ on $m$ vertices, where $m=\omega(1)$ and $d = \omega(m^{\frac{1}{k + 2 - \beta}})$. Since $G$ is $H_k$-free, so is $G'$ and hence we may apply Lemma~\ref{lem:many diverse} to find a diverse collection $\mathcal{P}$ of paths in $G'$ with $|\mathcal{P}| \geq cm d^{ k + 2}$, where $c$ is a positive constant depending only on $|H|$ and $k$. Let $V_0\cup \dots \cup V_{k+2}$ be the partition of $V(G')$ witnessing the diversity of $\mathcal{P}$. 

    By the pigeonhole principle, there exist vertices $u\in V_0$ and $v\in V_{k+2}$ such that there are at least $\frac{cm d^{k + 2}}{m^2} = \omega(d^{\beta})$ members of $\mathcal{P}$ with endpoints $u$ and $v$. Call the collection of these paths $\mathcal{P}_{uv}$. 

    Let $F$ be the bipartite graph with parts $N_{G'}(u)\cap V_1$ and $N_{G'}(v)\cap V_{k+1}$ where for $x \in N_{G'}(u)\cap V_1$ and $y \in N_{G'}(v)\cap V_{k+1}$, we have $xy\in E(F)$ if and only if there is a path $w_0w_1\dots w_{k + 1}w_{k + 2}$ in $\mathcal{P}_{uv}$ with $w_0 = u$, $w_1 = x$, $w_{k + 1} = y$ and $w_{k + 2} = v$ or, in other words, if there is a path in $\mathcal{P}_{uv}$ starting with $ux$ and ending with $yv$. Observe that, by the diversity of the collection $\mathcal{P}_{uv}$, there is at most one path in $\mathcal{P}_{uv}$ starting with $ux$ and ending with $yv$. Hence, we have $e(F)=|\mathcal{P}_{uv}|= \omega(d^{\beta})$. Moreover, note that if distinct paths $uw_1 \dots w_{k + 1}v$ and $uz_1\dots z_{ k + 1}v$ in $\mathcal{P}_{uv}$ intersect internally, then their only internal intersection is either $w_1=z_1$ or $w_{k+1}=z_{k+1}$. Indeed, since $w_i,z_i\in V_i$, any intersection is of the form $w_i=z_i$ for some $1\leq i\leq k+1$. Suppose that $w_i=z_i$ for some $2\leq i\leq k$. Then, by diversity, as $i \leq k$ and the first vertex of these paths is the same, we must have $w_j=z_j$ for all $1\leq j \leq i$. Similarly, as $i \geq 2$ and the last vertex of these paths is the same, we must have $w_j=z_j$ for all $i\leq j\leq k+1$. Thus, these two paths are identical.

    Note that  we have $v(F)  \leq 2 d$. Since $e(F)=\omega(d^{\beta})$ and $\ex(n,H)=O(n^{\beta})$, it follows that $F$ contains $H$ as a subgraph. Let $J$ be this copy of $H$ in $F$. Replacing each edge $xy\in E(J)$ with the unique path $xw_2 \dots w_k y$ in $G'$ such that $uxw_2\dots w_k yv\in \mathcal{P}_{uv}$ and adding the vertices $u$ and $v$, we obtain a copy of $H_k$ in $G'$, which is nondegenerate by the previous paragraph.
\end{proof}

\subsection{The proof of Lemma \ref{fewnotgood}}

In this subsection we prove Lemma \ref{fewnotgood}. We will need the following definition.

\begin{definition}
	A pair of distinct vertices $\{x, y\}$ in $G$ is said to be $(\ell,L)$-\emph{heavy} for some $\ell\geq 2$ if there are at least $f(\ell-1,L)^{16}$ internally vertex-disjoint paths of length $\ell$ from $x$ to $y$. In particular, Lemma~\ref{manydisjoint} implies that if there is an $L$-admissible, but not $L$-good, path of length $\ell$ from $x$ to $y$, then $\{x,y\}$ is $(\ell,L)$-heavy.
\end{definition}

In what follows, for $v\in V(G)$, we shall write $\Gamma_i(v)$ for the set of vertices $u\in V(G)$ for which there exists a path of length $i$ from $v$ to $u$. The next lemma, a variant of {\cite[Lemma 6.7]{conlon2021more}}, will be applied with $X=N(v)$ and $Y=\Gamma_{\ell-1}(v)$ to show that in a $(K_{t,t})_k$-free graph there cannot be many heavy pairs between $N(v)$ and $\Gamma_{\ell-1}(v)$. We remind the reader that the tree $T_{i,t}$ was introduced in Definition \ref{def:tree}.

\begin{lemma} \label{findstructure}
	Let $\ell\geq 2$, $i\geq 0$ and $t\geq 2$ be integers. Let $G$ be a $d$-regular graph on $n$ vertices with $d=\omega(1)$. Let $v\in V(G)$ and let $X,Y\subset V(G)$ be such that $|X|=\omega(1)$, $|Y|\geq \frac{d^{\ell-1}}{f(\ell-1,L)^2}$ and the number of $(x,y)\in X\times Y$ such that $(x,y)$ is $(\ell,L)$-heavy is at least $\frac{|X||Y|}{f(\ell-1,L)^2}$. Then, provided that $L$ is sufficiently large in terms of $\ell$, $i$ and $t$, there exist a copy of $T_{i,t}$ in $G-v$ with its leaves embedded as a set $R\subset Y$ and a subset $X'\subset X$ such that $|X'|=\omega(1)$ and $(x',r)$ is $(\ell,L)$-heavy for every $x'\in X'$ and $r\in R$.
\end{lemma}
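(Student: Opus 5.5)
We work in the auxiliary bipartite graph $B$ with parts $X$ and $Y$, where $xy\in E(B)$ exactly when $(x,y)$ is $(\ell,L)$-heavy. By hypothesis $B$ has density at least $1/f(\ell-1,L)^2$, and this is a constant. The constant $\omega(1)$ size of $X$ is what we exploit. The plan is dependent random choice on the $X$ side, combined with a greedy embedding of $T_{i,t}$ in $G-v$ whose leaves all lie inside a set of vertices of $Y$ that have many common heavy partners in $X$.

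\textbf{Step 1: dependent random choice.} Let $s:=|V(T_{i,t})|$ (at most about $t^2 i+t+1$) and $m:=t^2$, the number of leaves. Call a set $R\subset Y$ of size $m$ \emph{rich} if its common $B$-neighbourhood in $X$ has size $\omega(1)$. Pick a vertex $x_0\in X$ uniformly at random and let $Y_0:=N_B(x_0)$. By convexity, $\mathbb{E}|Y_0|\ge |Y|/f(\ell-1,L)^2$.

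We want to show that $Y_0$ contains few $m$-sets that are not rich. A fixed $m$-set $R$ lies inside $Y_0$ with probability $|N_B(R)|/|X|$. Hence if $|N_B(R)|\le g$ with $g=o(|X|)$, this probability is $o(1)$. The expected number of such bad $m$-sets in $Y_0$ is then at most $o(1)\binom{|Y|}{m}$.

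Choosing $g\to\infty$ slowly with $g=o(|X|)$ is possible because $|X|=\omega(1)$. Together with the bound on $\mathbb{E}|Y_0|$, this gives a fixed $x_0$ such that $Y_0$ has size at least $c|Y|$, where $c=c(\ell,L)>0$, and only $o(|Y|^m)$ of its $m$-subsets are bad.

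\textbf{Step 2: embedding the tree, and the main obstacle.} We must embed $T_{i,t}$ in $G-v$ with all its leaves in $Y_0$, such that the leaf set is not among the $o(|Y|^m)$ bad sets. Here the degree and size conditions matter: $|Y_0|\ge c\,d^{\ell-1}$ while $G$ is $d$-regular.

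This is the step I expect to be the real difficulty. A priori, vertices of $Y_0$ could be spread out so that no small tree has leaves only in $Y_0$. The lower bound $|Y|\gtrsim d^{\ell-1}$, together with $Y$ being reachable by paths of length $\ell-1$, suggests counting copies of $T_{i,t}$ rooted suitably. For instance, count the ``stars of paths of length $i$'' ending in $Y_0$.

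Concretely, I would first count homomorphic copies of $T_{i,t}$ with leaves in $Y_0$. Each leaf is reached from its star centre by a walk of length $i$. Using $d$-regularity and Sidorenko/Hölder-type inequalities for trees (trees satisfy the relevant Cauchy--Schwarz bounds), the number of such maps is at least $n d^{e(T)}$ times $(|Y_0|/n)^{m}$ raised to an appropriate power. Then:
\begin{itemize}
\item discard degenerate maps, which form a lower-order term since $d=\omega(1)$;
\item discard maps using $v$, also lower order;
\item discard maps whose leaf set is bad. The number of bad $m$-sets is $o(|Y|^m)$, and each bad set extends to at most about $n d^{e(T)-m\,\cdot}\cdots$ trees by regularity.
\end{itemize}

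If the crude counts do not compare directly, I would restrict to trees whose hub vertex is chosen so that leaves lie at distance $i+1$ from a common hub. I would then use the structure of $Y$ as a subset of $\Gamma_{\ell-1}(v)$ of near-maximum size $d^{\ell-1}/f^2$, so that leaves are counted relative to $d^{\ell-1}$ rather than $n$.

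\textbf{Step 3: conclusion.} A surviving tree gives a leaf set $R\subset Y_0\subset Y$ that is rich. Taking $X':=N_B(R)$, we have $|X'|=\omega(1)$ and every pair $(x',r)$ with $x'\in X'$, $r\in R$ is heavy. Finally, taking $L$ large is needed only so that $f(\ell-1,L)$ is a fixed constant and all the densities above are positive constants depending on $\ell$, $i$ and $t$.
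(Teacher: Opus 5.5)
There is a genuine gap, at exactly the step you flag: Step 2 is not an argument, and the route you sketch for it cannot be completed. The inputs you propose for the tree count are $|Y_0|\ge c\,d^{\ell-1}$ and $d$-regularity. As a fallback you add $Y\subseteq\Gamma_{\ell-1}(v)$, which is not a hypothesis of the lemma. None of these forces a copy of $T_{i,t}$ in $G-v$ with all its leaves in $Y_0$. Take $\ell=2$, $i=0$, let $G$ be a $d$-regular $C_4$-free graph (for instance the incidence graph of a projective plane) and let $Y_0=N(v)$. Every vertex other than $v$ has at most one neighbour in $Y_0$, so no star with $t\ge 2$ leaves in $Y_0$ exists in $G-v$. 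Relatedly, the lemma allows $d$ to grow arbitrarily slowly, so any homomorphism lower bound obtained from $|Y_0|$ and regularity alone can be $o(1)$; degenerate maps are then not of lower order.

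The missing idea is to use what heaviness means inside $G$, not just the abstract bipartite graph $B$. Write $f=f(\ell-1,L)$. If $(x_0,y)$ is $(\ell,L)$-heavy, the second vertices of the $f^{16}$ internally disjoint paths from $y$ to $x_0$ are $f^{16}$ distinct neighbours of $y$ in $\Gamma_{\ell-1}(x_0)$. Moreover $|\Gamma_{\ell-1}(x_0)|\le d^{\ell-1}\le f^{O(1)}|Y_0|$. So the bipartite graph between disjoint large parts of $Y_0$ and $\Gamma_{\ell-1}(x_0)$ has average degree polynomial in $f$. This is large once $L$ is large, so, contrary to your closing remark, the largeness of $L$ is used essentially here. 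All leaves of $T_{i,t}$ lie at distance $i+1$ from the hub, hence in one colour class, so such trees embed greedily in this graph, avoiding $v$, with all leaves on the $Y_0$ side.

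Even with this embedding in hand, your Step 1 does not combine with it. Dependent random choice only says that all but an $o(1)$-fraction of the $m$-subsets of $Y_0$ are rich. The leaf sets of trees found in such a locally dense auxiliary graph may themselves form an $o(1)$-fraction of all $m$-subsets, so nothing stops them from all being bad. Your count comparison would need a global lower bound on the number of trees, which you do not have.

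The paper instead makes the richness local:
\begin{enumerate}
\item It first passes to $Y'\subseteq Y$, the vertices with at least $|X|/(2f^2)$ heavy partners in $X$.
\item It picks $x^*\in X$ heavy to at least $|Y'|/(2f^2)$ of these vertices, and uses the argument above to embed a \emph{larger} tree $T_{i,t_0}$, with $t_0=f^4$, whose leaf set $R_0$ lies in that heavy neighbourhood.
\item Double counting heavy pairs between $X$ and $R_0$ gives $\widetilde X\subseteq X$ with $|\widetilde X|\ge |X|/(4f^2)$ such that each $x\in\widetilde X$ is heavy to at least $|R_0|/(4f^2)$ of the leaves.
\item Since $t_0$ is large compared with $f^2t$, this forces a subtree $T(x)\cong T_{i,t}$ of $T_{i,t_0}$ all of whose leaves are heavy to $x$.
\item As $T_{i,t_0}$ has only boundedly many such subtrees, pigeonholing over $\widetilde X$ gives a set $X'$ of size $\omega(1)$ sharing a single $T(x)$.
\end{enumerate}
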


\begin{proof}
	Let us write $f:=f(\ell-1,L)$. Let $Y'$ be the set of those $y\in Y$ for which the number of $x\in X$ such that $(x,y)$ is $(\ell,L)$-heavy is at least $\frac{|X|}{2f^2}$. Then $|Y'|\geq \frac{|Y|}{2f^2}\geq \frac{d^{\ell-1}}{2f^4}$ and the number of $(x,y)\in X\times Y'$ which are $(\ell,L)$-heavy is at least $\frac{|X||Y'|}{2f^2}$. In particular, there exists some $x^*\in X$ such that there are at least $\frac{|Y'|}{2f^2}$ $(\ell,L)$-heavy pairs of the form $(x^*,y)$ for $y\in Y'$. Let $U_0\subset Y'$ be the set of such vertices $y$. Note that $|U_0|\geq \frac{d^{\ell-1}}{4f^6}$.
	
	Let $W_0=\Gamma_{\ell-1}(x^*)$. Since $G$ is $d$-regular, we have $|W_0|\leq d^{\ell-1}$. Since $(x^*,u)$ is $(\ell,L)$-heavy for all $u\in U_0$, there are at least $f^{16}$ internally vertex-disjoint paths of length $\ell$ from $u$ to $x^*$. The second vertex on each of these paths is in $W_0$, so there are at least $|U_0|f^{16}$ pairs $(u,w)\in U_0\times W_0$ with $uw\in E(G)$. Since $|U_0|\geq \frac{d^{\ell-1}}{4f^6}$ and $|W_0|\leq d^{\ell-1}$, it follows that the number of pairs $(u,w)\in U_0\times W_0$ with $uw\in E(G)$ is also at least $|W_0|f^{10}/4$. Note that $U_0$ and $W_0$ may not be disjoint. However, we can easily pass to disjoint subsets $U\subset U_0$ and $W\subset W_0$ such that the number of pairs $(u,w)\in U\times W$ with $uw\in E(G)$ is at least $2(|U|+|W|)f^9$. Let $G'$ be the bipartite subgraph of $G$ with parts $U$ and $W$ and edges inherited from $G$. By the above discussion, $G'$ has average degree at least $f^9$.
    
    Let $t_0=f^{4}$. Note that $T_{i,t_0}$ has $it_0^2+t_0+1$ vertices. Hence, if $L$ is sufficiently large, then the average degree condition of $G'$ implies that $G'-v$ contains a copy of $T_{i,t_0}$ with the set of leaves embedded in $U$ as some subset $R_0\subset U$. Let us call the embedding of this tree $T$. Since $U\subset Y'$, for each $u\in R_0$, the number of $x\in X$ for which $(x,u)$ is $(\ell,L)$-heavy is at least $\frac{|X|}{2f^2}$. Writing $\widetilde{X}$ for the set of $x\in X$ for which there are at least $\frac{|R_0|}{4f^2}$ vertices $u\in R_0$ such that $(x,u)$ is $(\ell,L)$-heavy, we have $|\widetilde{X}|\geq \frac{|X|}{4f^2}$, so $|\widetilde{X}|=\omega(1)$. If $i\geq 1$, then $\frac{|R_0|}{4f^2} = \frac{t_0^2}{4f^2}\geq 2t_0t$ and it follows that for each $x\in \widetilde{X}$ there are at least $t$ constituent stars in $T$ containing at least $t$ leaves $u$ such that $(x,u)$ is heavy. Hence, for each $x\in \widetilde{X}$, there exists a subtree $T(x)\subset T$ isomorphic to $T_{i,t}$ such that $(x,u)$ is $(\ell,L)$-heavy for each leaf $u$ of $T(x)$. The same conclusion holds also when $i=0$. Since $|\widetilde{X}|=\omega(1)$ and $T$ has constant size, there must exist a set $X'\subset \widetilde{X}$ of size $\omega(1)$ such that $T(x)$ is the same for all $x\in X'$. Then the embedding of $T_{i,t}$ in $G$ corresponding to this tree $T(x)$ satisfies the conditions of the lemma.
\end{proof}

Following {\cite[Lemma 6.8]{conlon2021more}}, we now prove that only a small proportion of the paths of given length through a fixed vertex are admissible but not good. 

\begin{lemma} \label{lem:per vertex}
	Let $2\leq \ell\leq k$ and $t$ be positive integers and $L$ be a sufficiently large constant in terms of $k$ and $t$. Let $G$ be a $(K_{t,t})_k$-free $d$-regular graph on $n$ vertices with $d=\omega(1)$ and fix some $v\in V(G)$. Then, for all sufficiently large $n$, the number of $L$-admissible, but not $L$-good, paths of the form $v_0vv_2v_3\dots v_\ell$ is at most $\frac{d^\ell}{f(\ell-1,L)}$.
\end{lemma}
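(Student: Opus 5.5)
We argue by contradiction, following the outline in the toy model. Write $f:=f(\ell-1,L)$ and suppose that more than $d^\ell/f$ paths of the form $v_0vv_2\dots v_\ell$ are $L$-admissible but not $L$-good. By Lemma~\ref{manydisjoint}, for each such path the pair $\{v_0,v_\ell\}$ is $(\ell,L)$-heavy. Set $X=N(v)$ and $Y=\Gamma_{\ell-1}(v)$, so $|X|=d=\omega(1)$ and every such path gives a heavy pair $(v_0,v_\ell)\in X\times Y$. We convert the path count into a count of heavy pairs. Each pair $(x,y)$ is the endpoint pair of at most a bounded number of these paths. Indeed, the subpath $vv_2\dots v_\ell$ of an admissible path is $L$-good of length $\ell-1$. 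Hence, once $v_\ell=y$ is fixed, there are at most $f$ choices of it, because each is an admissible path of length $\ell-1$ from $v$ to $y$ and goodness bounds their number by $f$. So the number of heavy pairs in $X\times Y$ is at least $d^{\ell}/f^2\ge |X||Y|/f^2$, using $|Y|\le d^{\ell-1}$. Counting the same way, the set $Y_0$ of $y$ that occur satisfies $|Y_0|\ge d^{\ell-1}/f^2$. We therefore work with $Y_0$ in place of $Y$ (or keep $Y$ and accept slightly worse constants), so that the hypotheses of Lemma~\ref{findstructure} hold.

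Next we apply Lemma~\ref{findstructure} with $i=k-\ell$. This gives a copy $T$ of $T_{k-\ell,t}$ in $G-v$, with leaf set $R\subset Y$, and a set $X'\subset N(v)$ of size $\omega(1)$ such that every pair in $X'\times R$ is $(\ell,L)$-heavy. We then build $(K_{t,t})_k$ greedily, as in Figure~\ref{fig:embed F outline}. Choose $u_1,\dots,u_t\in X'$ outside $V(T)$, which is possible because $|X'|=\omega(1)$ and $T$ has constant size. The star $v u_1\dots u_t$ gives the hub joined to one side. The $t$ star-centres of $T$ are the original vertices on the other side, and the root of $T$ is the second hub. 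For the $r$th constituent star, its leaf $r_{b,a}$ is to be joined to $u_a$ by a path of length $\ell$. Combined with the arm of length $k-\ell$, this gives the subdivided edge between $u_a$ and the $b$th centre, of total length $k$.

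The remaining step is to pick $t^2$ paths of length $\ell$, one from $u_a$ to each $r_{b,a}$, that are internally disjoint from each other and from everything already used. Each such pair is heavy, so it has at least $f^{16}$ internally disjoint paths available. The set of forbidden vertices has size $O_{k,t}(1)$ at every stage, and each forbidden vertex lies on at most one of these disjoint paths. So if $L$, and hence $f$, is large, a valid path always remains and the greedy choice succeeds. This yields $(K_{t,t})_k$ in $G$, a contradiction.

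The main obstacle is the counting in the first paragraph. We must check that the multiplicity bound really follows from goodness of the proper subpaths, that the case $\ell-1=1$ is handled, and that the densities needed by Lemma~\ref{findstructure} come out with the stated powers of $f$ and no further loss.
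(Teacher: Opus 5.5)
Your proposal is correct and matches the paper's proof step for step: the multiplicity bound from goodness of $vv_2\dots v_\ell$, Lemma~\ref{findstructure} applied with $X=N(v)$, $Y=\Gamma_{\ell-1}(v)$ and $i=k-\ell$ (keeping $Y$ loses nothing, since $|Y|\ge|Y_0|\ge d^{\ell-1}/f^2$), and the greedy choice of internally disjoint length-$\ell$ paths from the $u_a$ to the leaves. The paper treats $\ell=k$ separately, where $T_{0,t}$ is a plain star whose leaves are themselves the second-part vertices; your greedy argument goes through unchanged there by joining every $u_a$ to every leaf.
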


\begin{proof}
	Suppose that there are more than $\frac{d^\ell}{f(\ell-1,L)}$ such paths. Note that if the path $v_0vv_2\dots v_\ell$ is $L$-admissible, then $vv_2\dots v_\ell$ is $L$-good, so the number of $L$-admissible paths of length $\ell-1$ from $v$ to $v_\ell$ is at most $f(\ell-1,L)$. Hence, the number of $L$-admissible paths $v_0vv_2\dots v_\ell$ is at most $f(\ell-1,L)$ for any fixed $v_0$ and $v_\ell$. Thus, the number of pairs $(x,y)\in N(v)\times \Gamma_{\ell-1}(v)$ such that there is an $L$-admissible, but not $L$-good, path of the form $xvv_2\dots v_{\ell-1}y$ is at least $\frac{d^\ell}{f(\ell-1,L)^2}\geq \frac{|N(v)||\Gamma_{\ell-1}(v)|}{f(\ell-1,L)^2}$. Recall that any such pair $(x,y)$ is $(\ell,L)$-heavy. Note that we must also have $\frac{d^\ell}{f(\ell-1,L)^2}\leq |N(v)||\Gamma_{\ell-1}(v)|$, so $|\Gamma_{\ell-1}(v)|\geq \frac{d^{\ell-1}}{f(\ell-1,L)^2}$. Hence, the conditions of Lemma \ref{findstructure} are satisfied with $X=N(v)$ and $Y=\Gamma_{\ell-1}(v)$.
	
	Our aim now is to find a copy of $(K_{t,t})_k$ in $G$, which will yield a contradiction. By Lemma \ref{findstructure} (applied with $i=k-\ell$), there exist a copy $T$ of the tree $T_{k-\ell,t}$ in $G-v$ and a set $X'$ of size $\omega(1)$ such that $(x',r)$ is $(\ell,L)$-heavy for every $x'\in X'$ and every leaf $r$ of $T$. Choose vertices $u_1,\dots,u_t$ in $X'$ that are distinct from each other, from $v$ and from the vertices of $T$. To complete the embedding of $(K_{t,t})_k$, we consider two cases.
	
	If $\ell=k$, then $i=0$ and $T$ is a star with $t$ leaves. Call these leaves $r_1,\dots,r_t$. Since $(u_a,r_b)$ is $(\ell,L)$-heavy for all $a,b\in [t]$ and $L$ is sufficiently large, there exist internally vertex-disjoint paths of length $\ell=k$ connecting the pairs $(u_a,r_b)$ in such a way that these connecting paths are internally disjoint from $v$, $\{u_a:a\in [t]\}$ and the vertex set of $T$. Then, since $u_1,\dots,u_t\in N(v)$, these connecting paths, together with $v$, $\{u_a:a\in [t]\}$ and $T$, form a copy of $(K_{t,t})_k$, as desired.
	
	Assume now that $\ell<k$. Let $\{r_{b,c}: b,c\in [t]\}$ be the leaves of $T$, where, for any $b\in [t]$, the set of leaves $\{r_{b,c}: c\in [t]\}$ belong to the same $(i-1)$-subdivided $t$-star with center $w_b$. Since $(u_a,r_{b,c})$ is $(\ell,L)$-heavy for all $a,b,c\in [t]$ and $L$ is sufficiently large, there exist internally vertex-disjoint paths of length $\ell=k-i$ connecting the pairs $(u_a,r_{b,a})$ for all $a,b\in [t]$ in such a way that these connecting paths are internally disjoint from $v$, $\{u_a:a\in [t]\}$ and the vertex set of $T$. Once again, these connecting paths, together with $v$, $\{u_a:a\in [t]\}$ and $T$, form a copy of $(K_{t,t})_k$, as desired. Indeed, between each $u_a$ and $w_b$ there is a path of length $k$ through $r_{b,a}$ such that all these paths are internally vertex-disjoint. Hence, we have a $(k-1)$-subdivided $K_{t,t}$, together with a vertex $v$ joined to $u_1,\dots,u_t$ and the center of $T$ joined to $w_1,\dots,w_t$. See Figure \ref{fig:embed F} for an illustration.
\end{proof}

\begin{figure}
\centering
\begin{tikzpicture}[
	font=\small,
	vertex/.style={circle,fill=black,inner sep=0pt,minimum size=3.6pt},
	hub/.style={vertex,minimum size=4.4pt},
	edge/.style={draw=black,line width=.65pt},
	arm/.style={edge,draw=red,dash pattern=on 2.8pt off 1.6pt},
	added/.style={draw=addedpath,line width=.85pt,
		dash pattern=on 3.2pt off 1.8pt,
		preaction={draw=white,solid,line width=3pt}},
	every label/.style={font=\small,fill=white,inner sep=1pt},
	line cap=round
	]
	\definecolor{addedpath}{RGB}{27,99,157}
	% Main construction. The indices match r_{b,a} in the proof.
	\coordinate (v) at (0,0);
	\coordinate (w) at (10.6,0);
	\foreach \a in {1,2,3} {
		\coordinate (u-\a) at (1.55,{2.35*(2-\a)});
	}
	\foreach \b in {1,2,3} {
		\coordinate (b-\b) at (8.9,{2.35*(2-\b)});
		\foreach \a in {1,2,3} {
			\coordinate (r-\b-\a) at (6.75,{2.35*(2-\b)+.62*(2-\a)});
		}
	}
	
	% Direct hub edges and the arms of T_{i,3}.
	\foreach \a in {1,2,3} { \draw[edge] (v)--(u-\a); }
	\foreach \b in {1,2,3} {
		\draw[edge] (b-\b)--(w);
		\foreach \a in {1,2,3} { \draw[arm] (r-\b-\a)--(b-\b); }
	}
	
	% Nine pairwise internally disjoint length-ell paths. White under-strokes
	% indicate overcrossings; these are not graph vertices.
	\foreach \a in {3,2,1} {
		\foreach \b in {3,2,1} {
			\draw[added] (u-\a)
			.. controls ($(u-\a)+({1.1+.20*\b},{.45*(\a-\b)})$)
			and ($(r-\b-\a)+({-1.3-.16*\a},0)$)
			.. (r-\b-\a);
		}
	}
	
	% Place vertices and labels last so that they remain unobscured.
	\node[hub,label=left:{$v$}] at (v) {};
	\node[hub] at (w) {};
	\foreach \a in {1,2,3} {
		\ifnum\a=3
		\node[vertex,label=below left:{$u_{\a}$}] at (u-\a) {};
		\else
		\node[vertex,label=above left:{$u_{\a}$}] at (u-\a) {};
		\fi
	}
	\foreach \b in {1,2,3} {
		\ifnum\b=3
		\node[vertex,label=below right:{$w_{\b}$}] at (b-\b) {};
		\else
		\node[vertex,label=above right:{$w_{\b}$}] at (b-\b) {};
		\fi
		\foreach \a in {1,2,3} {
			\ifnum\a=3
			\node[vertex,label=below right:{$r_{\b,\a}$}] at (r-\b-\a) {};
			\else
			\node[vertex,label=above right:{$r_{\b,\a}$}] at (r-\b-\a) {};
			\fi
		}
	}
	
	% Equal font sizes and a common baseline for the two length labels.
	\node[anchor=base,text=addedpath,font=\small] at (4.15,3.7)
	{paths of length $\ell$};
	\node[anchor=base,text=red,font=\small] at (8.65,3.7)
	{paths of length $i=k-\ell$};
	
	% The bracket spans the tree, from its leaves to its additional hub w.
	\draw[decorate,decoration={brace,mirror,amplitude=5pt},line width=.5pt]
	(6.75,-3.65)--node[below=8pt] {$T_{i,3}$} (10.6,-3.65);
\end{tikzpicture}
\caption{The embedding of $(K_{t,t})_k$ for $t=3$ and $\ell<k$.}
\label{fig:embed F}
\end{figure}

Considering that there are $n$ choices for the vertex $v$, Lemma \ref{lem:per vertex} immediately implies the following global upper bound on the number of $L$-admissible but not $L$-good paths of length $\ell$.

\begin{corollary} \label{admissiblenotgood}
	Let $2\leq \ell\leq k$ and $t$ be positive integers and $L$ be a sufficiently large constant in terms of $k$ and $t$. Let $G$ be a $(K_{t,t})_k$-free $d$-regular graph on $n$ vertices with $d=\omega(1)$. Then, for all sufficiently large $n$, the number of $L$-admissible, but not $L$-good, paths of length $\ell$ in $G$ is at most $\frac{nd^\ell}{f(\ell-1,L)}$.
\end{corollary}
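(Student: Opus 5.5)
This follows from Lemma~\ref{lem:per vertex} by a counting argument in which each path is charged to its second vertex. Every path $v_0v_1\dots v_\ell$ of length $\ell\geq 2$ has a well-defined second vertex $v_1$. So it has the form $v_0vv_2\dots v_\ell$ with $v=v_1$, and it is counted when Lemma~\ref{lem:per vertex} is applied to the vertex $v$.

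The plan has three steps.

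\begin{enumerate}
\item Fix $2\leq\ell\leq k$ and $t$. Take $L$ large enough that Lemma~\ref{lem:per vertex} holds for this $\ell$, with the same requirement on $L$ in terms of $k$ and $t$.

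\item Take $n$ large enough for the lemma. The lemma's threshold for $n$ depends only on $k$, $t$, $L$ and the function witnessing $d=\omega(1)$, not on the chosen vertex $v$. Hence a single choice of $n$ works for all $v$ at once.

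\item Partition the set of $L$-admissible, not $L$-good paths of length $\ell$ according to the vertex in position $1$. Admissibility and goodness are properties of the ordered path. A path and its reversal are either both admissible and not good or both not, since the definitions are symmetric: subpaths reverse to subpaths and the endpoint pair is unchanged. Whether paths are counted as ordered sequences or as unordered paths, the ordered count dominates. Summing the bound $d^\ell/f(\ell-1,L)$ over the $n$ choices of $v$ then gives at most $nd^\ell/f(\ell-1,L)$.
\end{enumerate}

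No step is a genuine obstacle. The only points to check are the uniformity of the threshold for $n$ in $v$ and that the counting convention for paths (ordered or unordered) matches Lemma~\ref{lem:per vertex}. In the lemma the paths are written as ordered sequences $v_0vv_2\dots v_\ell$, so counting ordered paths is consistent and gives the stated bound directly.
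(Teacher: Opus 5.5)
Your proposal is correct and is exactly the paper's argument: classify each $L$-admissible but not $L$-good path by its second vertex $v$, and sum the per-vertex bound of Lemma~\ref{lem:per vertex} over the $n$ choices of $v$. Your remarks on the threshold for $n$ being uniform in $v$, and on ordered versus unordered counting, are valid; the paper simply leaves them implicit.
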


It is now easy to deduce Lemma \ref{fewnotgood}.

\begin{proof}[\textnormal{\textbf{Proof of Lemma \ref{fewnotgood}}}]
	Suppose that the path $u_0u_1\dots u_k$ is not $L$-good. Take $0\leq i<j\leq k$ with $j-i$ minimal such that $u_iu_{i+1}\dots u_j$ is not $L$-good. Then $u_i\dots u_j$ is $L$-admissible. For any fixed $i,j$, by Corollary \ref{admissiblenotgood}, the number of such paths is at most $\frac{nd^{j-i}}{f(j-i-1,L)}\cdot d^{k-(j-i)}=\frac{1}{f(j-i-1,L)}nd^k\leq \frac{1}{L}nd^k$. Using that $i$ and $j$ can take at most $k+1$ values each, it follows that the number of not $L$-good paths of length $k$ is at most $\frac{(k+1)^2}{L}nd^k\leq \frac{1}{10}nd^k$.
\end{proof}

\section{Concluding remarks}

While the rational exponents conjecture is now resolved, several related problems remain open. These include the Bukh--Conlon conjecture on rooted powers of trees and the Kang--Kim--Liu subdivision conjecture, both of which were known to imply the rational exponents conjecture.

\begin{conjecture}[Bukh--Conlon \cite{bukhconlon18}]\label{conj:bukh-conlon}
    For any balanced rooted tree $T$ and any positive integer $\ell$, $\ex(n,T^{\ell})=O_{\ell}(n^{2-1/\rho(T)})$.
\end{conjecture}

\begin{conjecture}[Kang--Kim--Liu \cite{kkl21}] \label{conj:KKL}
    Let $F$ be a bipartite graph with $\ex(n,F)=O(n^{1+\alpha})$ for some $\alpha>0$. Then $\ex(n,F')=O(n^{1+\alpha/2})$, where $F'$ is the $1$-subdivision of $F$.
\end{conjecture}

Progress on Conjecture~\ref{conj:bukh-conlon} can be found in~\cite{conjan22,conlon2021more,janzer2020extremal,jjm22,jly26,jmy22,jiangqiu20, jiangqiu23,kkl21}, while 
progress on Conjecture~\ref{conj:KKL} can be found in \cite{conlon2021more,jly26}.

\medskip

\paragraph{Statement on AI use.} We used ChatGPT Pro to draw the figures in this paper and to proofread an early draft.

\bibliographystyle{abbrv}
\bibliography{bibliography}

\begin{thebibliography}{10}

\bibitem{bloom26}
T.~F. Bloom.
\newblock Webpage available at https://www.erdosproblems.com/571, 2026.

\bibitem{bukh15}
B.~Bukh.
\newblock Random algebraic construction of extremal graphs.
\newblock {\em Bull. Lond. Math. Soc.}, 47(6):939--945, 2015.

\bibitem{bukhconlon18}
B.~Bukh and D.~Conlon.
\newblock Rational exponents in extremal graph theory.
\newblock {\em J. Eur. Math. Soc. (JEMS)}, 20(7):1747--1757, 2018.

\bibitem{chakraborti2026regular}
D.~Chakraborti, O.~Janzer, A.~Methuku, and R.~Montgomery.
\newblock Regular subgraphs at every density.
\newblock {\em Trans. Amer. Math. Soc.}, 379(11):8069--8090, 2026.

\bibitem{conjan22}
D.~Conlon and O.~Janzer.
\newblock Rational exponents near two.
\newblock {\em Adv. Comb.}, 2022:Paper No. 9, 10 pp., 2022.

\bibitem{conlon2021more}
D.~Conlon, O.~Janzer, and J.~Lee.
\newblock More on the extremal number of subdivisions.
\newblock {\em Combinatorica}, 41(4):465--494, 2021.

\bibitem{erdos81}
P.~Erd\H{o}s.
\newblock On the combinatorial problems which {I} would most like to see
  solved.
\newblock {\em Combinatorica}, 1(1):25--42, 1981.

\bibitem{erdossim66}
P.~Erd\H{o}s and M.~Simonovits.
\newblock A limit theorem in graph theory.
\newblock {\em Studia Sci. Math. Hungar.}, 1:51--57, 1966.

\bibitem{erdossim70}
P.~Erd\H{o}s and M.~Simonovits.
\newblock Some extremal problems in graph theory.
\newblock In {\em Combinatorial theory and its applications, {I}-{III} ({P}roc.
  {C}olloq., {B}alatonf\"ured, 1969)}, volume~4 of {\em Colloq. Math. Soc.
  J\'anos Bolyai}, pages 377--390. North-Holland, Amsterdam-London, 1970.

\bibitem{erdosstone46}
P.~Erd\H{o}s and A.~H. Stone.
\newblock On the structure of linear graphs.
\newblock {\em Bull. Amer. Math. Soc.}, 52:1087--1091, 1946.

\bibitem{janzer2020extremal}
O.~Janzer.
\newblock The extremal number of the subdivisions of the complete bipartite
  graph.
\newblock {\em SIAM J. Discrete Math.}, 34(1):241--250, 2020.

\bibitem{jjm22}
T.~Jiang, Z.~Jiang, and J.~Ma.
\newblock Negligible obstructions and {T}ur\'an exponents.
\newblock {\em Ann. Appl. Math.}, 38(3):356--384, 2022.

\bibitem{jly26}
T.~Jiang, S.~Longbrake, and L.~Yepremyan.
\newblock Rational exponents near 3/2.
\newblock {\em Preprint available at arXiv:2607.19607 [math.CO]}, 2026.

\bibitem{jmy22}
T.~Jiang, J.~Ma, and L.~Yepremyan.
\newblock On {T}ur\'an exponents of bipartite graphs.
\newblock {\em Combin. Probab. Comput.}, 31(2):333--344, 2022.

\bibitem{jiangqiu20}
T.~Jiang and Y.~Qiu.
\newblock Tur\'an numbers of bipartite subdivisions.
\newblock {\em SIAM J. Discrete Math.}, 34(1):556--570, 2020.

\bibitem{jiangqiu23}
T.~Jiang and Y.~Qiu.
\newblock Many {T}ur\'an exponents via subdivisions.
\newblock {\em Combin. Probab. Comput.}, 32(1):134--150, 2023.

\bibitem{jiang2012turan}
T.~Jiang and R.~Seiver.
\newblock Tur{\'a}n numbers of subdivided graphs.
\newblock {\em SIAM J. Discrete Math.}, 26(3):1238--1255, 2012.

\bibitem{kkl21}
D.~Y. Kang, J.~Kim, and H.~Liu.
\newblock On the rational {T}ur\'an exponents conjecture.
\newblock {\em J. Combin. Theory Ser. B}, 148:149--172, 2021.

\bibitem{kst54}
T.~K\H{o}v\'{a}ri, V.~T. S\'os, and P.~Tur\'an.
\newblock On a problem of {K}. {Z}arankiewicz.
\newblock {\em Colloq. Math.}, 3:50--57, 1954.

\end{thebibliography}

\end{document}